\providecommand{\keywords}[1]{\textbf{\textit{Keywords---}} #1}
\providecommand{\MSC}[1]{\textbf{\textit{MSC---}} #1}
\title{Spectral Properties of Infinitely Smooth Kernel Matrices in the Single Cluster Limit, with Applications to Multivariate Super-Resolution}
\author[1]{Nuha Diab}
\author[1]{Dmitry Batenkov}
\affil[1]{Department of Applied Mathematics, Tel Aviv University, Tel Aviv, Israel}
\date{\today}
\newcommand{\ci}{{\cal X}} 
\newcommand{\di}{{\cal Y}} 
\newcommand{\sss}{{\cal S}} 
\newcommand{\Gg}{{\cal G}} 
\newcommand{\td}[1][d]{\mathbb{T}^{#1}} 
\newcommand{\ZZ}{\mathbb{Z}}
\newcommand{\NN}{\mathbb{N}}
\newcommand{\PP}{\mathbb{P}}
\newcommand{\HH}{\mathbb{H}}
\newcommand{\RR}{\mathbb{R}}
\newcommand{\TT}{\mathbb{T}}
\newcommand{\CC}{\mathbb{C}}
\newcommand{\srf}{\text{SRF}}
\DeclareMathOperator{\sep}{sep} 
\DeclareMathOperator{\rank}{rank} 
\DeclareMathOperator{\diag}{diag} 
\DeclareMathOperator{\sinc}{sinc}
\DeclareMathOperator{\Sinc}{Sinc}
\newtheorem{theorem}{Theorem}[section]     
\newtheorem{lemma}[theorem]{Lemma}
\newtheorem{proposition}[theorem]{Proposition}
\newtheorem{corollary}[theorem]{Corollary}
\theoremstyle{definition}
\newtheorem{definition}[theorem]{Definition}
\theoremstyle{remark}
\newtheorem{remark}[theorem]{Remark}
\begin{document}

\maketitle

\begin{abstract}
    We study the spectral properties of infinitely smooth multivariate kernel matrices when the nodes form a single cluster. We show that the geometry of the nodes plays an important role in the scaling of the eigenvalues of these kernel matrices. For the multivariate Dirichlet kernel matrix, we establish a criterion for the sampling set ensuring precise scaling of eigenvalues. Additionally, we identify specific sampling sets that satisfy this criterion. Finally, we discuss the implications of these results for the problem of super-resolution, i.e. stable recovery of sparse measures from bandlimited Fourier measurements.
\end{abstract}

\keywords{kernel matrices, flat limit, dirichlet kernel, eigenvalues, multivariate Vandermonde matrices, super-resolution} 
\par \MSC{15A18, 65T40, 65F20, 47A55, 47A75, 47B34}

\section{Introduction}

    \textbf{Super-Resolution Problem.} The problem of super-resolution (SR) is to recover the fine details of a signal from inherently low resolution measurements in the frequency domain.  The SR problem has a variety of applications in signal processing, imaging, optics, inverse scattering, and data analysis problems \cite{Bertero1996,lindberg2012}. In the last years, a particular SR model has received considerable attention, where the signal is modelled as a sparse measure \cite{batenkov2021a}. Let $\TT^d := [-\pi,\pi)^d\equiv \left(\RR\mod 2\pi\right)^d$, let $\delta(\cdot)$ denote the Dirac delta distribution {\color{black}(in the sense of generalized functions \cite{strichartz2003})}, and consider the signal
    \begin{equation} \label{eq:spike}
        f(t) := \sum_{j=1}^n\alpha_j\delta (t-x_j), \quad t, x_j \in \TT^d, \quad \alpha_j \in \CC {\color{black}.}
    \end{equation}
    We refer to $\{x_j\}$ as the "nodes" and to $\{\alpha_j\}$ as the "coefficients" or ``amplitudes''. The noisy measurements are given by
    \begin{equation} \label{eq:samples}
        \hat{f}(\omega) := \sum_{j=1}^n \alpha_j e^{\imath \langle \omega,x_j \rangle}+\epsilon(\omega), \quad \omega \in \Gg_N := \{-N+1,\dots,-1,0,1,\dots,N-1\}^d \subset \ZZ^d,\quad |\epsilon(\omega)|\leq \epsilon.
    \end{equation}
    {\color{black}The problem of super-resolution is, given discrete noisy Fourier measurements \eqref{eq:samples},
    recover the parameters of model \eqref{eq:spike}, i.e. $\{\alpha_j,x_j\}$.}
    \par Moreover, define $\srf := \frac{1}{N\Delta}$, where $\Delta$ represents the smallest distance (e.g. in the infinity norm) between the nodes. It is well-known that, at least in the one-dimensional case, this quantity controls the numerical stability of the problem. The super-resolution regime is when $\srf \gg 1$; while the well-separated case is when $\srf \leq 1$. In the super-resolution regime, nodes are organized in clusters, with a distance of order $\Delta$ between any two nodes within a cluster.

    \par One primary objective in theory of super-resolution is to define optimal bounds for reconstruction errors, commonly referred to as min-max error bounds, or the computational resolution limits \cite{donoho1992a,liu2021,Li&Liao2020, demanet2015,batenkov2021a}. These bounds are achieved by the {\color{black} (intractable) ``oracle''} algorithm under the worst-case scenario. Furthermore, these stability bounds serve to validate the optimality of different tractable solution methods, offering assurances regarding the methods' performance \cite{li2020a, katz2023b, katz2024b}. It has been shown previously that the min-max bounds are related to the smallest singular value of the Vandermonde matrix $U$ of the system \cite{demanet2015}, which is defined as
    \begin{equation} \label{eq:Van}
        U(x_1,\dots,x_n;\Gg_N) := \big[\exp{\imath \langle \omega, x_j \rangle}\big]_{\omega \in \Gg_N}^{j=1,\dots,n}\in\CC^{|\Gg_N|\times n}{\color{black}, \  \langle \omega, x_j \rangle := \omega^* x_j = \sum_{k=1}^d \omega_k^* x_{j,k},}
    \end{equation}
    {\color{black}where $\omega^*$ denotes the conjugate transpose of $\omega$.}

    \par Extensive research has been conducted on the one-dimensional {\color{black}$(d=1)$} scenario, covering both well-separated and super-resolution regimes \cite{batenkov2021, Li&Liao2020,cuyt2018,petz2021}. 
    Let $\ell$ be the number of nodes in the largest cluster. Then, in the super-resolution regime, the smallest singular value of the Vandermonde matrix scales like $\srf^{1- \ell}$ {\color{black}whenever $N\Delta \geq c(n)$}, resulting in ``on-grid'' min-max bounds to be of order $\srf^{2\ell -1}\epsilon$ \cite{batenkov2018, Li&Liao2020}, (cf. similar off-grid bounds \cite{batenkov2021a,liu2022}).
     In the multidimensional context, several results are available for the well-separated regime as well as the super-resolution regime \cite{liu2021,kunis2021,kunis2020a,garcia2020,poon2019}, under specific conditions related to the unknown nodes. A key distinction between the one-dimensional and multidimensional cases lies in the spectrum of the Vandermonde matrix, wherein the geometry of the nodes plays a crucial role. As we shall show in this paper, and consistent with the observations in \cite{kunis2020a}, it is not solely the distance between the nodes that determines the spectral properties of the Vandermonde matrix, at least in the near-colliding limit, but also the algebraic variety on which these nodes are situated.

     \par \textbf{Kernel matrices in the flat limit.} Following the previous discussion, let us consider the Gramian matrix {$D_{N}:=\frac{1}{{(2N)}^d}U^*U$}, which is the ``kernel matrix'' for the multidimensional Dirichlet kernel. Kernel matrices are of importance in various fields, including scattered data approximation and machine learning, see e.g. \cite{usevich2021,wathen2015spectral,usevich2024computing} and references therein. In the context of super-resolution, if all the nodes $\mathcal{X}=\{x_j\}_{j=1}^n$ form a single cluster and $\srf\gg 1$, then the kernel matrix $D_{N}$ can be considered in the so-called ``flat limit'', a term introduced in \cite{driscoll2002} in the context of radial basis function (RBF) interpolation. The flat limit was investigated recently in various publications, from among those \cite{usevich2021} and \cite{wathen2015spectral} being most relevant to our work. While \cite{wathen2015spectral} deals exclusively with RBF kernels and derives asymptotic eigenvalue scaling, \cite{usevich2021} extends those results to arbitrary smooth kernels, albeit under the assumption of the points not lying on any low-dimensional algebraic variety. The approach from \cite{wathen2015spectral} is primarily based on Micchelli's lemma \cite{micchelli1986}, and we have adopted this approach previously in the one-dimensional case \cite{batenkov2021}.
    
    \par \textbf{Contributions.} The main result of this paper, Theorem~\ref{Thm:main}, derives the asymptotic scaling of eigenvalues of a kernel matrix induced by an infinitely smooth kernel in the flat limit. {\color{black} This  generalizes} the corresponding result of \cite{usevich2021} to arbitrary geometry, {\color{black}and the result in Theorem 8 of \cite{wathen2015spectral} to arbitrary smooth kernels.}. As a corollary, we derive the  asymptotic scaling of the eigenvalues of $D_{N}$, leading to the appropriate scaling of the singular values of the Vandermonde matrix \eqref{eq:Van}, when the nodes are arranged within a single cluster. In this context, a "single cluster" is characterized by the condition $N\Delta \ll 1$. We therefore obtain a generalization of the results from \cite{kunis2021} which considered mainly the ``single line'' geometry, as well as the extension of \cite[Theorem 2.3]{batenkov2021} to the multivariate case. See also Remark~\ref{rem:weilin}.
    
    In more detail, suppose that $x_j=\Delta y_j$ with $\epsilon=N\Delta$, and take $\epsilon\to 0$. We show that the number of eigenvalues of $D_{N}(\{x_1,\dots,x_n\})$ decaying like $\epsilon^{2k}$, is precisely equal to a certain number $t_k$ which depends on the algebraic properties of the original nodes $\{y_j\}$ (see Theorem \ref{thm:sr}). In particular, when  all the nodes belong to a one-dimensional affine subspace (a ``single-line'' geometry), then $t_k=1$, which implies $\sigma_{\min}(U)\approx N^{\frac{d}{2}}\epsilon^{n-1}$ (as derived in \cite{kunis2021}). On the other hand, when $\{y_j\}$ are in a general position, then, as long as $n\geq {k+d\choose d}$, we have $t_k={k+d-1\choose d-1}$, where $d$ is the dimension of the space. These numbers coincide for $d=1$, but may differ drastically for $d>1$. For example, if $d=2$ then $\sigma_{\min}(U)$ may be as large as $N\epsilon^{\mathcal{O}(\sqrt{n})}$. Our results are slightly more general and provide the eigenvalue scaling for arbitrary (symmetric) sampling sets $\omega\in\mathcal{S}$, with the upper bounds the same as described above; however we can prove matching lower bounds only for certain $\mathcal{S}$ \sout{satisfying} {\color{black}that either satisfy the} the so-called \emph{Geometric Characterization Condition} (in particular satisfied by $\Gg_N$), see Definition~\ref{def:gc} below {\color{black}or are 'lower sets' as described in Definition \ref{def:lower-set}}.
   
   To demonstrate the relevance of our findings to the multivariate SR problem, in Section~\ref{sec:numerics} we also report on a numerical study of reconstructing the signals from noisy measurements using the nonlinear least squares (NLS) method and the multidimensional ESPRIT method \cite{sahnoun2017}. Our results indeed suggest that the asymptotic conditioning of the problem may vary from the ``worst-case" single-line scenario (previously considered in \cite{liu2021}), to the ``best-case'' general position scenario. We are therefore confident that our findings will enhance the understanding of super-resolution stability in high-dimensional settings and can contribute to the analysis of current multivariate SR recovery methods such as \cite{diederichs2022,pena2024,sauer2017,sauer2018, sahnoun2017}. Furthermore, we believe these results can serve as an important step towards establishing min-max error bounds in the general multidimensional scenario, when the nodes form multiple clusters (where some kind of multidimensional ``confluent'' Vandermonde matrices may be required, cf. \cite{batenkov2013a, batenkov2013c, batenkov2018-genspikes, batenkov2023}). 

   {\bf Organization of the paper.} In Section~\ref{sec:prelim} we establish some notation and definitions. In Section~\ref{sec:main-results} we present our main result, Theorem~\ref{Thm:main}, which describes the asymptotic scaling of the eigenvalues of kernel matrices in the flat limit. In Section~\ref{sec:dirichlet} we specialize the results to the Dirichlet kernel matrix, showing, in particular, that the bounds of Theorem~\ref{Thm:main} are tight. In Section~\ref{sec:vandermonde} we derive the corresponding scaling of the singular values of the Vandermonde matrix, which requires additional technical arguments. Finally, numerical results are presented in Section~\ref{sec:numerics}.

    {\bf Acknowledgements.} We thank Konstantin Usevich, Benedikt Diederichs, and Weilin Li for helpful discussions. This work has been supported by the Israel Science Foundation, grant 1793/20, by a collaborative grant from the Volkswagen Foundation, and by the Ariane de Rothschild Women Doctoral Program.

\section{Preliminaries}\label{sec:prelim}
We adapt some definitions and notation from \cite{usevich2021}.
For a multi-index $\boldsymbol{\alpha}=(\alpha_1,\dots,\alpha_d) \in \ZZ^d_{+}$, denote
\[
    \big|\boldsymbol{\alpha}\big| := \sum_{j=1}^d\alpha_j, \quad \boldsymbol{\alpha}! := \alpha_1!\cdot \cdot \cdot \alpha_d!,
\]
and let for each $j=0,1,2,\dots$
\[
    \PP_j := \{\boldsymbol{\alpha}\in \ZZ^d_{+} : \big|\boldsymbol{\alpha}\big| \leq j\}, \quad
    \HH_j := \{\boldsymbol{\alpha}\in \ZZ^d_{+} : \big|\boldsymbol{\alpha}\big| = j\} = \PP_j\setminus\PP_{j-1},\quad \PP_{-1}:=\emptyset.
\]
The cardinalities of these sets are given by:
\begin{equation}\label{eq:card}
    p_j := \#\PP_j = {j+d \choose d}, \quad h_j := \#\HH_j = {j+d-1 \choose d-1} = \#\PP_j - \#\PP_{j-1}.
\end{equation}

For $\boldsymbol{x} \in \RR^d$ and $\boldsymbol{\alpha}\in \ZZ^d_{+}$,  the monomial $\boldsymbol{x}^{\boldsymbol{\alpha}}$ is defined as $\boldsymbol{x}^{\boldsymbol{\alpha}}:= x_1^{\alpha_1} \cdot \cdot \cdot x_d^{\alpha_d}$,  and its (total) degree is $|\boldsymbol{\alpha}|$. Let $\prec$ be a fixed graded order on the set of multi-indices $\ZZ^d_{+}$ (or, equivalently, on the set of monomials), for example a  graded lexicographic order or a graded reverse lexicographic order. {\color{black}The ordering $\prec$ is used throughout the sequel.}


\begin{definition}\label{smooth-kernel}
    Let $\Omega\subseteq \RR^d$ be a fixed open set. A kernel function $\mathcal{K}:\Omega \times \Omega \rightarrow \RR$ is said to belong to the class $\mathcal{C}^{(r,r)}(\Omega)$ (or $\mathcal{C}^{\PP_r\times \PP_r}(\Omega)$) when the partial derivatives $$\mathcal{K}^{(\boldsymbol{\alpha,\beta})}(\boldsymbol{x},\boldsymbol{y}):=\frac{\partial^{|\boldsymbol{\alpha}|+|\boldsymbol{\beta}|}}{\partial x_1^{\alpha_1}\cdot\cdot\cdot\partial x_d^{\alpha_d}\partial y_1^{\beta_1}\cdot\cdot\cdot\partial y_d^{\beta_d}}\mathcal{K}(\boldsymbol{x},\boldsymbol{y})$$ exist and are continuous on $\Omega \times\Omega$, for all $\boldsymbol{\alpha,\beta}\in \PP_r$. When $\mathcal{K}\in\mathcal{C}^{(r,r)}(\Omega)$ for all $r\in\NN$, we write $\mathcal{K}\in\mathcal{C}^{(\infty,\infty)}(\Omega)$.
\end{definition}

\begin{definition}
    For a kernel $\mathcal{K} \in \mathcal{C}^{(r,r)}(\Omega)$, a finite set $\ci {\color{black}= \{\boldsymbol{x}_1,\dots,\boldsymbol{x}_n\}} \subset \Omega$ and $\epsilon > 0 $ we define the scaled kernel $\mathcal{K}_{\epsilon}(\boldsymbol{x},\boldsymbol{y}):=\mathcal{K}(\epsilon \boldsymbol{x}, \epsilon \boldsymbol{y})$ and the kernel matrix
    \begin{equation}
        K_{\epsilon}(\ci) := \big[ \mathcal{K}_{\epsilon}(\boldsymbol{x}, \boldsymbol{x'})\big]_{\boldsymbol{x},\boldsymbol{x'} \in \ci} \in \RR^{\color{black}n\times n}.
    \end{equation}
\end{definition}

\begin{definition}[Multivariate Vandermonde matrix]
For a finite set $\ci=\{\boldsymbol{x}_1,\dots,\boldsymbol{x}_n\}\subset\RR^d$ of nodes, and an $\prec$-ordered set of multi-indices $\mathcal{A}=\{\boldsymbol{\alpha}_1,\dots,\boldsymbol{\alpha}_m\} \subset \ZZ^d_{+}$ (i.e. $\boldsymbol{\alpha}_1\prec \dots \prec \boldsymbol{\alpha}_m$), we define the multivariate Vandermonde matrix as
\begin{equation}\label{eq:multiVan}
    V_{\mathcal{A}} = V_{\mathcal{A}}(\ci) = \big[(\boldsymbol{x}_i)^{\boldsymbol{\alpha}_j}\big]_{1\leq i\leq n}^{{1\leq j\leq m}}\in\RR^{n\times m}.
\end{equation}
\end{definition}

{\color{black}Note that \(V\) is a multivariate real Vandermonde matrix with monomial entries, in contrast to the complex Vandermonde matrix \(U\) defined in \eqref{eq:Van}, whose entries are of the form \(e^{\imath\langle\omega,x\rangle}\).}
Let us further denote $V_{\leq k} := V_{\PP_k}(\ci)$ and $V_{k} := V_{\HH_k}(\ci)$. The matrix $V_{\leq k} \in \RR^{n \times p_k}$ can be partitioned into $k+1$ matrices $V_k \in \RR^{n \times h_k}$ arranged by increasing degree:
\begin{equation}\label{eq:van-deg-k}
     V_{\leq k} = \big[V_0 \quad V_1 \quad \dots \quad V_k\big].
\end{equation}

\begin{definition}\label{def:wronskian}
    Let $\mathcal{K}\in \mathcal{C}^{(r,r)}(\Omega)$, with $\Omega\subseteq \RR^d$ and $\mathbf{0}\in\Omega$. Let $\mathcal{A},\mathcal{B}\subset \ZZ^d_+$ be two sets of multi-indices satisfying $|\alpha|,|\beta|\leq r$ for all $\alpha \in \mathcal{A},\beta \in \mathcal{B}$. The Wronskian matrix of $\mathcal{K}$ is defined as:
    \begin{equation}\label{eq:wronskian}
        W^{\mathcal{K}}_{\mathcal{A},\mathcal{B}} = \bigg[{{\mathcal{K}^{(\alpha,\beta)}(0,0)}\over{\alpha! \beta!}}\bigg]_{\alpha \in \mathcal{A},\beta \in \mathcal{B}},
    \end{equation}
where the rows and columns are indexed by multi-indices in $\mathcal{A}$ and $\mathcal{B}$ according to the chosen ordering. In addition, denote by $W^{\mathcal{K}}_{\leq k}=W^{\mathcal{K}}_{\PP_k}=W^{\mathcal{K}}_{\PP_k,\PP_k}$.
\end{definition}

The final piece of notation is central to our discussion, describing a geometric property of $\ci$.

\begin{definition}[Discrete moment order, \cite{schabak2005}]\label{def:mu-def}
The \emph{discrete moment order} of a set $\ci$ consisting of $n$ pairwise distinct points, is the smallest number $m=\mu(\mathcal{X})$ such that
$$
\ker(V_{\leq m}(\ci)^T)=\emptyset 
$$
{\color{black}holds, or equivalently, $$\rank (V_{\leq m}(\ci))=n$$ holds.}
\end{definition}

Note that such $m$ exists (is finite). Indeed, if $\ci=\{x_1,\dots,x_n\}$, then $\mu(\ci)\leq n-1$ because the following Lagrange interpolation polynomials, each of total degree $n-1$, are linearly independent:
$$
\biggl\{\ell_i(x)=\prod_{j\neq i}\frac{H_j(x)}{H_j(x_i)}\biggr\}_{i=1}^n,
$$
where $H_j$ is some affine hyperplane containing $x_j$ and not containing every other $x_k,k\neq j$.
 Cf.  \cite[Definition 3]{schabak2005},\cite{GascaMariano2000}  and \cite{wathen2015spectral}.

\section{Eigenvalues of smooth kernel matrices in the flat limit}\label{sec:main-results}

In this section, we consider the kernel matrix $K_{\epsilon}=K_{\epsilon}(\ci)$ for a fixed infinitely smooth kernel $\mathcal{K}$ and a fixed set $\ci\subset\RR^d$ of $n$ pairwise distinct points. Our main interest is in the asymptotic order of the decay of the eigenvalues of $K_{\epsilon}$ in the flat limit $\epsilon\to 0$. We employ the standard $O(\cdot)$ notation.

Our main result gives an upper bound on the decay rates of all the eigenvalues.

\begin{theorem}\label{Thm:main}
    Let $m=\mu(\ci)$ be the discrete moment order of the set $\ci{\color{black}\subset\RR^d}$. For any infinitely smooth kernel $\mathcal{K}\in \mathcal{C^{(\infty,\infty)}}(\Omega)$ with $\mathbf{0}\in\Omega$, and small enough $\epsilon$, the eigenvalues of $K_{\epsilon}(\ci)$ split into $m+1$ groups
    \begin{align*}
        \lambda_{0,0} &= O(1), \quad \{\lambda_{1,j}\}_{j=1}^{t_1}=O(\epsilon^2),\quad \dots \quad, \quad\{\lambda_{m,j}\}_{j=1}^{t_m}=O(\epsilon^{2m}),\\
        t_k&:=\rank(V_{\leq k}(\ci))-\rank(V_{\leq k-1}(\ci)),\;k=1,2,\dots,m.
    \end{align*}
    \begin{remark}
        Theorem \eqref{Thm:main} is valid also for kernels in $\mathcal{C}^{(m+1,m+1)}$.
    \end{remark}
\end{theorem}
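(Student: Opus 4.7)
Expand the scaled kernel $\mathcal{K}_\epsilon(x,x')=\mathcal{K}(\epsilon x,\epsilon x')$ as a bivariate Taylor series about $(0,0)$, truncated at some order $r$ to be chosen later. Collecting terms in matrix form on $\mathcal{X}\times\mathcal{X}$ yields
\[
K_\epsilon(\mathcal{X}) \;=\; V_{\leq r}(\mathcal{X})\,D_\epsilon\,W^{\mathcal{K}}_{\leq r}\,D_\epsilon\,V_{\leq r}(\mathcal{X})^T + E_r,
\]
where $D_\epsilon=\diag(\epsilon^{|\alpha|})_{\alpha\in\PP_r}$ is block-diagonal with the $k$-th block equal to $\epsilon^k I_{h_k}$, and $\|E_r\|=O(\epsilon^{r+1})$. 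I would take $r\geq 2m+1$, so that the remainder is strictly smaller than the smallest eigenvalue scale $\epsilon^{2m}$ predicted by the theorem.

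\textbf{Staircase orthogonalization of the Vandermonde.} The key algebraic input is $\rank V_{\leq m}=n$ (definition of $m=\mu(\mathcal{X})$) together with the stratified ranks $\rank V_{\leq k}=s_k:=\sum_{j\leq k}t_j$. Ordering the columns of $V_{\leq m}$ by increasing degree and running Gram--Schmidt (or QR) produces $V_{\leq m}=Q R$, with $Q\in\RR^{n\times n}$ orthogonal and $R\in\RR^{n\times p_m}$ of \emph{staircase} shape: within the block of columns of degree $k$, all rows indexed $>s_k$ vanish, since the first $s_k$ columns of $Q$ already span $\mathrm{col}(V_{\leq k})$. For $r>m$, the extra columns of $V_{\leq r}$ already lie in $\mathrm{col}(V_{\leq m})$, so $\tilde R := Q^T V_{\leq r}$ inherits the same staircase pattern. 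Conjugating by $Q^T$ rewrites the factorization as $K_\epsilon = Q L_\epsilon Q^T + E_r$ with $L_\epsilon := (\tilde R D_\epsilon)\,W^{\mathcal{K}}_{\leq r}\,(\tilde R D_\epsilon)^T$.

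\textbf{Row-scaling and eigenvalue bound.} The staircase property implies that row $i$ of $\tilde R D_\epsilon$ has vanishing entries in all columns of degree $<k(i)$, where $k(i):=\min\{k:s_k\geq i\}$; hence $\|(\tilde R D_\epsilon)_{i,:}\|=O(\epsilon^{k(i)})$. A Courant--Fischer min-max applied with the test subspace $\mathrm{span}\{e_j,\dots,e_n\}$ yields $\sigma_j(\tilde R D_\epsilon)^2 = O(\epsilon^{2k(j)})$. Combining with the standard estimate $|\lambda_j(ABA^T)|\leq \|B\|\,\sigma_j(A)^2$ for symmetric $B$ gives $|\lambda_j(L_\epsilon)|=O(\epsilon^{2k(j)})$, and since $k(j)=k$ for exactly $t_k$ values of $j$, the asserted grouping follows. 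Finally Weyl's inequality, together with the choice $r\geq 2m+1$, transfers the bounds from $L_\epsilon$ to $K_\epsilon$.

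\textbf{Main obstacle.} I expect the delicate part to be the staircase orthogonalization and its consistent extension past degree $m$: one must verify that the degree-ordered Gram--Schmidt really produces exactly $t_k$ new pivots at each degree $k$ (combinatorially just the rank identities defining $t_k$, but requiring care to state cleanly as a matrix identity with the right block structure), and that no further pivots appear for degrees $>m$ because $\rank V_{\leq m}=n$. Once this algebraic skeleton is in place, the singular-value estimates and the $E_r$-perturbation arguments are routine and rely only on standard linear algebra.
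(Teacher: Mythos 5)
Your overall strategy --- Taylor-expand the kernel, conjugate by an orthogonal matrix built from a degree-ordered QR of the multivariate Vandermonde matrix, and then invoke Courant--Fischer --- is essentially the same as the paper's. The differences are stylistic: you perform one staircase QR of $V_{\leq m}$ while the paper uses a nested sequence of pivoted QR factorizations of $V_{\leq k-1}$ for $k=1,\dots,m$, and you truncate Taylor at order $r\geq 2m+1$ (making the remainder trivially negligible) while the paper truncates at order $m$ and instead exploits that the cross terms still carry an extra $\Delta_m$ factor so that $Q_{\perp}^T(\text{remainder})Q_{\perp}=O(\epsilon^{2m+1})$.

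However, your transfer step contains a genuine gap. The inequality
\[
|\lambda_j(A B A^T)| \;\leq\; \|B\|\,\sigma_j(A)^2
\]
is \emph{false} for a general symmetric $B$. Take $A=\diag(1,\epsilon)$ and $B=\begin{pmatrix}0&1\\1&0\end{pmatrix}$: then $ABA^T=\begin{pmatrix}0&\epsilon\\\epsilon&0\end{pmatrix}$ has eigenvalues $\pm\epsilon$, so $|\lambda_2|=\epsilon$, while $\|B\|\sigma_2(A)^2=\epsilon^2$. The inequality does hold when $B\succeq 0$, because then $ABA^T=(AB^{1/2})(AB^{1/2})^T$ and $\sigma_j(AB^{1/2})\leq\|B^{1/2}\|\sigma_j(A)$; but you never establish, or assume, that the Wronskian matrix $W^{\mathcal{K}}_{\leq r}$ is positive semidefinite. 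In fact the theorem as literally stated is also sensitive to this: for a general smooth (even symmetric) kernel, $W$ can be indefinite, and then the eigenvalue scaling can degrade --- already the paper's own use of Courant--Fischer after conjugating by $Q_{\perp}$ only gives one-sided bounds unless $K_\epsilon$ (or $W$) is positive semidefinite. In the paper's intended applications (the Dirichlet kernel, and Mercer kernels in general) $W$ is a Gram matrix and hence positive semidefinite, so the argument closes; you should either explicitly add this hypothesis, or replace the eigenvalue--singular-value transfer with the paper's more robust step of bounding $\|Q_{\perp,k-1}^T K_\epsilon Q_{\perp,k-1}\|$ directly and then applying Cauchy interlacing to the compressed matrix, which at least makes the reliance on positivity visible rather than buried inside an invalid linear-algebra inequality.

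One more small remark: your staircase claim is correct, but be careful to justify that the degree-ordered Gram--Schmidt assigns exactly $t_k$ new orthonormal directions while processing the degree-$k$ block --- this is precisely the content of $\rank V_{\leq k}-\rank V_{\leq k-1}=t_k$, as you note in your ``main obstacle'' paragraph, and should be stated as an explicit lemma rather than left as an expectation.
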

\begin{proof}
    We start by writing the Taylor expansion of $K_{\epsilon}$ around 0 as equation (55) in \cite{usevich2021}: 
    \begin{equation}\label{eq:taylor}
        K_\epsilon = V_{\leq m}\Delta_m W \Delta_m V_{\leq m}^T + \epsilon^{m+1}(V_{\leq m}\Delta_m W_1(\epsilon)+W_2(\epsilon)\Delta_m V_{\leq m}^T)+\epsilon^{2{(m+1)}}W_3(\epsilon) 
    \end{equation}
    where $W_i(\epsilon) = O(1)$, $W = W^{\mathcal{K}}_{\leq m}$ and $\Delta_m(\epsilon)=\diag(1,\epsilon I_{h_1},\dots,\epsilon^m I_{h_m}) \in \RR^{p_m \times p_m}$. 
    For the full derivation of \eqref{eq:taylor}, see section \ref{app:taylor} in the Appendix.

    Let $V = V_{\leq m}$ where $V = [V_0 \dots V_{m}] = [V_{\leq m-1} \quad V_m]$, and $t_k := \rank(V_{\leq k})-\rank(V_{\leq k-1})$ as defined in the theorem.
    In \cite{usevich2021} only the generic case $t_k=h_k-h_{k-1}$ is considered.  Here we generalize Theorem 6.1 in \cite{usevich2021} to the case where the multivariate Vandermonde matrices $V_{\leq k}$ are not necessarily full rank.

    Consider the full QR decomposition of $V_{\leq m-1}$ with column pivoting, as elaborated in Section 5.4.2, specifically, formula (5.4.6) in the book \cite{golub2013}: 
\[
    V_{\leq m-1} P_{m-1} = [Q_{\leq m-1} \quad Q_{\perp}] \begin{bmatrix}R_{11} & R_{12} \\ 0 & 0\end{bmatrix} = Q\begin{bmatrix}
        R \\
        0
    \end{bmatrix}
\]
where $Q_{\leq m-1} \in \RR^{n\times r_{m-1}}$, $Q_{\perp} \in \RR^{n\times t_m}$, $R_{11} \in \RR^{r_{m-1}\times r_{m-1}}$, $R \in \RR^{r_{m-1}\times p_{m-1}}$, $r_{m-1}+t_m=n$ and $P_{m-1}$ is a permutation matrix. \\
We have $[Q_{\leq m-1} \quad Q_{\perp}]^TV_{\leq m}P = \begin{bmatrix}R & Q_{\leq m-1}^TV_m \\ 0 & Q_{\perp}^TV_m\end{bmatrix}$
and $Q_{\perp}^TV_{\leq m}P = [0 \quad Q_{\perp}^TV_{m}]$ where $P:= \begin{bmatrix} P_{m-1} & 0 \\ 0 & I_{h_m}\end{bmatrix}$. Note that for every permutation matrix, we have $P^{-1}=P^T$. \\

Thus we can rewrite the expansion \eqref{eq:taylor} as follows:
\begin{align*}
    K_\epsilon &= \quad V_{\leq m}PP^T\Delta_mPP^T WPP^T \Delta_mPP^T V_{\leq m}^T\\
    &\quad  + \epsilon^{m+1}(V_{\leq m}PP^T\Delta_mPP^T W_1(\epsilon)+W_2(\epsilon)PP^T\Delta_mPP^T V_{\leq m}^T)\\
    &\quad +\epsilon^{2{(m+1)}}W_3(\epsilon) \\
    K_\epsilon &= \tilde{V}_{\leq m}\tilde{\Delta}_m\tilde{W}\tilde{\Delta}_m\tilde{V}_{\leq m}^T + \epsilon^{m+1}(\tilde{V}_{\leq m}\tilde{\Delta}_m \tilde{W}_1(\epsilon)+\tilde{W}_2(\epsilon)\tilde{\Delta}_m\tilde{V}_{\leq m}^T)+\epsilon^{2{(m+1)}}W_3(\epsilon) 
\end{align*}
where $\tilde{V}_{\leq m} := V_{\leq m}P$ , $\tilde{\Delta}_m := P^T\Delta_mP = {\color{black}\diag(\epsilon^{i_1},...,\epsilon^{i_{q_{m-1}}},\epsilon^{m}I_{h_m})}${\color{black}, $q_{m-1}:=\sum_{i=0}^{m-1}h_{i}$} due to Lemma \ref{app:lem}, $\tilde{W} := P^TWP$, $\tilde{W}_1(\epsilon):=P^TW_1(\epsilon)$ and $\tilde{W}_2(\epsilon):=W_2(\epsilon)P$. \\
Multiplying by $Q_{\perp}^T$ and its transpose we get:
\begin{align*} 
    Q_{\perp}^TK_{\epsilon}Q_{\perp} &= \quad[0 \quad Q_{\perp}^TV_{m}]\tilde{\Delta}_{\color{black}m} \tilde{W} \tilde{\Delta}_{\color{black}m}[0 \quad Q_{\perp}^TV_{m}]^T\\
    &\quad + \epsilon^{m+1}[0 \quad Q_{\perp}^TV_{m}]\tilde{\Delta}_{\color{black}m} \tilde{W}_1Q_{\perp}\\
    &\quad + \epsilon^{m+1}Q_{\perp}^T\tilde{W}_2\tilde{\Delta}_{\color{black}m} [0 \quad Q_{\perp}^TV_{m}]^T\\
    &\quad + (\epsilon^{m+1}Q_{\perp}^TW_3Q_{\perp}\epsilon^{m+1}).
\end{align*}
We can also write the above as:
\begin{align*}
    Q_{\perp}^TK_{\epsilon}Q_{\perp} &=\quad O(\epsilon^{2m})Q_{\perp}^TV_{m}\hat{W}V_{m}^TQ_{\perp}\\
    &\quad + O(\epsilon^{2m+1})(Q_{\perp}^TV_{m}\hat{W_1}Q_{\perp}\\
    &\quad  + Q^T_{\perp}\hat{W_2}V_{m}Q_{\perp})\\
    &\quad + O(\epsilon^{2{(m+1)}}),
\end{align*}
where $\hat{W}$ is the lower right $h_m \times h_m$ sub-matrix of $\tilde{W}$. Thus, by the Courant-Fischer principle, $K_{\epsilon}$ has at least $t_m = \dim(Q_{\perp})$ eigenvalues which decay as fast as $O(\epsilon^{2m})$. 
\par For any $k=1,...,m-1$, let $V_{\leq k-1}P_{k-1}=[Q_{\leq k-1}\quad Q_{\perp, k-1}] \begin{bmatrix}R_{\leq k-1} \\ 0 \end{bmatrix}$ be the full QR decomposition of $V_{\leq k}$ with pivoting, where $Q_{\leq k-1} \in \RR^{n\times [n-(t_k+\dots+t_m)]}$, $Q_{\perp, k-1} \in \RR^{n\times [t_k+\dots+t_m]}$ and $R_{\leq k-1} \in \RR^{[n-(t_k+\dots+t_m)] \times p_{k-1}}$. 

We can apply the same argument on $Q_{\perp, k-1}$ and $V_{\leq k-1}$ by induction, then we get that there are at least $t_k+...+t_m$ eigenvalues which decay as fast as $O(\epsilon^{2k})$ and there are at least $t_{k+1}+...+t_m$ eigenvalues which decay as fast as $O(\epsilon^{2({k+1})})$. Therefore, there are at least $t_k$ eigenvalues which decay as fast as $O(\epsilon^{2{k}})$. Since $\sum_{i=0}^{m}t_i=n$ (assume $t_{0}=0$) we get that there are \sout{exactly} $t_k$ eigenvalues decaying (at least) like $\epsilon^{2k}$ when $\epsilon$ goes to 0.
\end{proof}

The bounds of Theorem~\ref{Thm:main} are not necessarily tight. As we show in the next lemma, the exactness of the asymptotic orders depends on the analytic properties of the kernel $\mathcal{K}$ and the geometry of $\ci$, therefore additional assumptions ought to be made on both of those objects in order to obtain matching lower bounds.

{\color{black}
\begin{definition}\label{def:rtilde}
    Let $\ci\subset\RR^d$. Consider the matrices $Q_{\perp, k-1}$ defined in the proof of Theorem~\ref{Thm:main}.
    {\color{black}We define a sequence $\{\mathcal{M}_k\}_{k=0}^m$ of linear subspaces of $\mathbb{R}^n$ with $\dim\mathcal{M}_k=t_k$, $\mathcal{M}_{k}\subseteq \operatorname{range}(Q_{\perp, k-1})$ and $ \mathcal{M}_{k} \perp \operatorname{range}(Q_{\perp, k})$. By a slight abuse of notation, we also denote by $\mathcal{M}_{k} \in \RR^{n \times t_k}$ a matrix whose columns span $\mathcal{M}_{k}$. For consistency we set  $\operatorname{range}(Q_{\perp,-1})\equiv\RR^{n}$. Since $\sum_{i=0}^m t_i=n$, we have
    \[ \mathcal{Q}_{\perp} := \big(\mathcal{M}_{0}\;\mathcal{M}_{1}\;\dots\mathcal{M}_{m}\big) \in \RR^{n\times n}.\]
    }
    The matrix $\mathcal{Q}_{\perp}^TV_{\leq m}$ is upper triangluar. We denote its block-diagonal part by
    $$
    \tilde{R}:=\operatorname{blkdiag} \left(\mathcal{Q}_{\perp}^TV_{\leq m}\right) = \diag\bigl\{\mathcal{M}_{0}^TV_0,...,\mathcal{M}_{m}^TV_m\bigr\}.
    $$
    Each diagonal block of $\tilde{R}$ is full rank by definition, therefore $\tilde{R}$ is full rank.
\end{definition}

\begin{lemma}\label{lem:det-K}
    For $\mathcal{K} \in \mathcal{C^{(\infty,\infty)}}(\Omega)$ and $\ci\subset\RR^d$, let $\tilde{R}\in\RR^{n\times p_m}$  be as in Definition~\ref{def:rtilde}. Define $C=\det\left(\tilde{R} W \tilde{R}^T\right)\in\mathbb{R}$ where $W=W^{\mathcal{K}}_{\leq m}\in\RR^{p_m\times p_m}$ is the Wronskian matrix of $\mathcal{K}$ as in Definition~\ref{def:wronskian}. Then 
    \[ 
        \det(K_{\epsilon}(\ci)) = \epsilon^{2\sum_{i=0}^{m}it_i}(C + O(\epsilon)), \quad \epsilon\to 0.
    \]
\end{lemma} 
\begin{proof}
    By \cite[Lemma 6.4]{usevich2021} we have
    \[ E_n^{-1}\mathcal{Q}_{\perp}^TV_{\leq m}\Delta_m = \tilde{R}+O(\epsilon)\]
    where $E_n = \diag\{1,\epsilon I_{t_1},...,\epsilon^m I_{t_m}\} \in \RR^{n\times n}$ and $\tilde{R} \in \RR^{n\times p_m}$. \\
    Thus, using \eqref{eq:taylor}, we get that
    \begin{align*}
        E_n^{-1}\mathcal{Q}_{\perp}^TK_{\epsilon}\mathcal{Q}_{\perp}E_n^{-1} &= (\tilde{R}+O(\epsilon))W(\tilde{R}^T+O(\epsilon))\\ &+ \epsilon^{m+1}((\tilde{R}+O(\epsilon)) W_1(\epsilon)\mathcal{Q}_{\perp}E_n^{-1}\\ &+ E_n^{-1}\mathcal{Q}_{\perp}^TW_2(\epsilon)(\tilde{R}^T+O(\epsilon)))\\ &+\epsilon^{2{(m+1)}}E_n^{-1}\mathcal{Q}_{\perp}^TW_3(\epsilon)\mathcal{Q}_{\perp}E_n^{-1} \\ &= \tilde{R}W\tilde{R}^T + O(\epsilon)
    \end{align*}
     where the last equality follows from $\epsilon^{2{(m+1)}}E_n^{-1} = O(\epsilon)$ and $\tilde{R}W\tilde{R}^T \in \RR^{n\times n}$ .
     This implies that $\epsilon^{-2\sum_{j=0}^mjt_j}\det(K_{\epsilon})=\det(\tilde{R}W\tilde{R}^T+O(\epsilon)) = \det(\tilde{R}W\tilde{R}^T)+O(\epsilon)$ (by Lemma \eqref{app:det-lem} in the Appendix) with $C = \det(\tilde{R}W\tilde{R}^T)$.
     
\end{proof}
}

\begin{corollary}\label{cor:tightness}
    Suppose $\mathcal{K}$ is symmetric and analytic (in both variables) in the neighborhood of $(\mathbf{0},\mathbf{0})$. The scaling of the eigenvalues as given in Theorem~\ref{Thm:main} is exact, if and only if $\det(\tilde{R}W\tilde{R}^T) \neq 0$. {\color{black}By 'exact scaling' we mean that there exists a constant $\tilde{\lambda}_{k,j} \neq 0$ independent of $\epsilon$ such that $\lambda_{k,j}=\epsilon^{2k}(\tilde{\lambda}_{k,j}+O(\epsilon))$}.
\end{corollary}
\begin{proof}
By Theorem~\ref{Thm:main} and \cite[Theorem 2.9]{usevich2021}, the eigenvalues of $K_{\epsilon}(\ci)$ satisfy
\[
    \lambda_{0,0} = \epsilon^{0}(\tilde{\lambda}_{0,0}+O(\epsilon)), \quad \{\lambda_{1,j}\}_{j=1}^{t_1}=\{\epsilon^{2}(\tilde{\lambda}_{1,j}+O(\epsilon))\}_{j=1}^{t_1}, ... ,\{\lambda_{m,j}\}_{j=1}^{t_m}=\{\epsilon^{2m}(\tilde{\lambda}_{m,j}+O(\epsilon))\}_{j=1}^{t_m},
\]
 where $t_k:=\rank(V_{\leq k})-\rank(V_{\leq k-1})$ and $m=\mu(\ci)$, and $\tilde{\lambda}_{s,j}$ do not depend on $\epsilon$ (but may be zero). By Lemma~\ref{lem:det-K} we have that
$$
\big|K_{\epsilon}(\ci)\big|=\epsilon^{2\sum_{i=0}^{m}it_i}\biggl\{\big|\tilde{R}W\tilde{R}^T\big| + O(\epsilon)\biggr\}.
$$

On the other hand, the determinant is the product of eigenvalues, therefore
\begin{align*}
\big|K_{\epsilon}(\ci)\big| &= \prod_{j=1}^n\lambda_j = \lambda_{0,0}\prod_{j=1}^{t_1}\lambda_{1,j}...\prod_{j=1}^{t_m}\lambda_{m,j} = (\tilde{\lambda}_{0,0}+O(\epsilon))\prod_{j=1}^{t_1}\epsilon^{2}(\tilde{\lambda}_{1,j}+O(\epsilon))...\prod_{j=1}^{t_m}\epsilon^{2m}(\tilde{\lambda}_{m,j}+O(\epsilon))\\
&= \epsilon^{2\sum_{i=0}^{m}it_i} \biggl\{\prod_{s=1}^m \prod_{j=1}^{t_s} \tilde{\lambda}_{s,j}+O(\epsilon)\biggr\}.
\end{align*}

Now we just compare the two expressions above. In one direction, if the scaling is exact then $\tilde{\lambda}_{s,j} \neq 0$ for all $0 \leq s \leq m$ and $0 \leq j \leq t_s$, which implies $\left|\tilde{R}W\tilde{R}^T\right|\neq 0$. In the other direction, if there are $\ell \geq 1$ zeros $\tilde{\lambda}_{k} \in \{\tilde{\lambda}_{s,j}\}_{s=1,\dots,m}^{j=1,\dots,t_m}$ and $\tilde{\lambda}_{k}=0$ for $k=1,...,\ell$, then $\big|K_{\epsilon}\big| = \epsilon^{\ell+2\sum_{i=0}^{m}it_i}(\prod_{k=\ell+1}^{n}\tilde{\lambda}_{k}+O(\epsilon))$, forcing $\left|\tilde{R}W\tilde{R}^T\right|=0$.
\end{proof}

\section{Eigenvalues of the multidimensional Dirichlet kernel}\label{sec:dirichlet}

In this section we consider the specific case of the multidimensional Dirichlet kernel, which is directly motivated by the problem of super-resolution as outlined in the Introduction. Let $\sss \subset \RR^d$ denote a finite set of frequencies,  symmetric around the origin, where the spectral data \eqref{eq:samples} is available. We first derive a generic sufficient condition on $\sss$ which ensures that the scaling order of Theorem~\ref{Thm:main} is exact. Later  we show that this condition is satisfied by $\sss=\Gg_N$, the regular symmetric grid in $\RR^d$ of side length $2N+1$, for large enough $N$.

\begin{definition}
    For $\sss \subset \RR^d$, symmetric around $\mathbf{0}\in\RR^d$, the (real-valued) Dirichlet kernel $\mathcal{D}_{\sss}$ is defined as follows:
    $$
    \mathcal{D}_{\sss}(x)=\sum_{\omega\in\sss} \exp\left( \imath \langle x,\omega \rangle \right).
    $$
    By a slight abuse of notation, we also denote by $\mathcal{D}_{\sss}$ the corresponding bi-multivariate kernel:
    \begin{equation}\label{eq:dirichlet-nd-def}
    \mathcal{D}_{\sss}(x,y)=\mathcal{D}_{\sss}(x-y)=\sum_{\omega\in\sss} \exp\left( \imath \langle x-y,\omega \rangle \right).
    \end{equation}
    Given $\ci\subset\RR^d$ and $\epsilon>0$, the corresponding Dirichlet kernel matrix and its flat limit version are denoted as:
    \begin{align}\label{eq:Dk}
        \begin{split}
            D(\ci,\sss) &:= \big[\mathcal{D}_{\sss}(x,x')\big]_{x,x'\in\ci},\\
         D_{\epsilon}(\ci, \sss) &:= D(\epsilon\ci, \sss) = \big[ \mathcal{D}_{\sss}(\epsilon x,\epsilon x')\big]_{x,x' \in \ci}.
        \end{split}
    \end{align}
    In addition, if $\sss=\sss_1\times...\times \sss_d$ is a tensor product grid, we have $\mathcal{D}_{\sss}(x,y)=\prod_{j=1}^d\mathcal{D}_{\sss_j}(x_j,y_j)$.
\end{definition}

\subsection{A nondegeneracy condition}

Note that $\mathcal{D}_{\sss}$ is a translation invariant, symmetric and analytic kernel. Recall the definition \eqref{eq:wronskian} of the Wronskian matrix for an arbitrary kernel. Set $W:=W^{\mathcal{D_S}}_{\leq m}$. By Corollary~\ref{cor:tightness}, in order to show tightness, it is sufficient to demonstrate that $C := \det(\tilde{R}W\Tilde{R}^T) \neq 0$. In the following lemma, we show that this holds under a certain (more or less natural) non-degeneracy assumption on the set $\sss$.


\begin{lemma}\label{lem:det}
    For $W:=W_{\leq m}$ as defined above, $\det(\tilde{R}W\Tilde{R}^T) > 0$ holds whenever $\rank(V_{\leq m}(\sss)) \geq n$, {\color{black}where $V_{\leq m}(\sss) = \big[{(\omega)}^{\alpha}\big]_{\omega \in \sss}^{\alpha \in \PP_m}$.}
\end{lemma}

\begin{remark}
    Note that contrary to Section~\ref{sec:main-results}, the Vandermonde matrix $V_{\leq m}$ is here evaluated on the set $\sss$ (and not on $\ci$), while the polynomial total degree is still $m=\mu(\ci)$.
\end{remark}

\begin{proof}[Proof of Lemma~\ref{lem:det}]

    We first show that $W$ is positive semidefinite. Directly from \eqref{eq:wronskian} and \eqref{eq:dirichlet-nd-def}, we have
    \begin{align*}
        W &= \bigg[{{\mathcal{D}_{\sss}^{(\alpha,\beta)}(0,0)}\over{\alpha! \beta!}}\bigg]_{\alpha,\beta \in \PP_m} = \bigg[\sum_{\omega \in \sss}{{(i\omega)^{\alpha}(-i\omega)^{\beta}}\over{\alpha! \beta!}}\bigg]_{\alpha,\beta \in \PP_m} \\
        &= \bigg[\sum_{\omega =(\omega_1,\dots,\omega_d)\in \sss}{{(i)^{|\alpha|}(-i)^{|\beta|}\omega_1^{\alpha_1+\beta_1}\cdot ... \cdot \omega_d^{\alpha_d+\beta_d}}\over{\alpha! \beta!}}\bigg]_{\alpha,\beta \in \PP_m}\\
         &= FV^T_{\leq m}(\sss)V_{\leq m}(\sss)F^{*},
     \end{align*}
     where $F := \diag\left(\frac{i^{|\alpha|}}{\alpha !}\right)_{\alpha \in \PP_m}$. Let $B := F V^T_{\leq m}(\sss)$, and consider $B=U\Sigma T^*$ the singular value decomposition of $B$. We therefore have $W = BB^* = U\Sigma^2 U^*$, implying that $W$ is positive semidefinite (since $W$ is Hermitian and has non-negative eigenvalues). Since $\tilde{R}$ is full rank {\color{black} (recall Definition~\ref{def:rtilde})}, it follows that $\tilde{R}W\tilde{R}^T$ is positive semidefinite as well, implying that $\det(\tilde{R}W\tilde{R}^T) \geq 0$. To ensure $\det(\tilde{R}W\tilde{R}^T) > 0$, we require that $\tilde{R}W\tilde{R}^T$ be positive definite. The latter condition is satisfied when $\rank(\tilde{R}W\tilde{R}^T) \geq n$, which is equivalent to $\rank(W) = \rank(V_{\leq m}(\sss)) \geq n$.
\end{proof}
\begin{remark}
    For $d=1$, we have that $m=n-1$ and $p_{n-1} = {n \choose 1} = n$, meaning that the condition $\rank(V_{\leq n-1}(\sss)) \geq n$ in Lemma~\ref{lem:det} is satisfied as long as the sampling set $\sss$ contains at least $n$ distinct points.
\end{remark}

 By applying Corollary~\ref{cor:tightness}, we immediately obtain the following result.

\begin{lemma}\label{lem:exact-scaling}
    Assume that the sampling set $\sss$ satisfies the condition $\rank(V_{\leq m}(\sss)) \geq n$. Then the eigenvalues of $D_{\epsilon}(\ci,\sss)$ split into $m+1$ groups
    \[
        \lambda_{0,0} = \epsilon^{0}(\tilde{\lambda}_{0,j}+O(\epsilon)), \quad \{\lambda_{1,j}\}_{j=1}^{t_1}=\{\epsilon^{2}(\tilde{\lambda}_{1,j}+O(\epsilon))\}_{j=1}^{t_1}, ... ,\{\lambda_{m,j}\}_{j=1}^{t_m}=\{\epsilon^{2m}(\tilde{\lambda}_{m,j}+O(\epsilon))\}_{j=1}^{t_m},
    \]
     where $t_k:=\rank(V_{\leq k}(\ci))-\rank(V_{\leq k-1}(\ci))$ and $\tilde{\lambda}_{s,j} \neq 0$ for all $0 \leq s \leq m$ and $0 \leq j \leq t_s$.
\end{lemma}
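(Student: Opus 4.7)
The proof plan is essentially a synthesis of the three preceding results: Theorem~\ref{Thm:main}, Corollary~\ref{cor:tightness}, and Lemma~\ref{lem:det}. First I would observe that the Dirichlet kernel $\mathcal{D}_{\sss}$ is symmetric (since $\sss$ is symmetric around the origin, making $\mathcal{D}_{\sss}(x-y)=\mathcal{D}_{\sss}(y-x)$) and analytic in both variables in a neighborhood of $(\mathbf{0},\mathbf{0})$ (as it is a finite sum of complex exponentials). Thus Corollary~\ref{cor:tightness} applies to $D_{\epsilon}(\ci,\sss)$.

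Next, I would apply Theorem~\ref{Thm:main} to $D_{\epsilon}(\ci,\sss)$, which produces the asymptotic expansion
\[
    \lambda_{0,0} = \tilde{\lambda}_{0,0}+O(\epsilon),\quad \{\lambda_{k,j}\}_{j=1}^{t_k}=\{\epsilon^{2k}(\tilde{\lambda}_{k,j}+O(\epsilon))\}_{j=1}^{t_k},\quad k=1,\dots,m,
\]
with $t_k=\rank(V_{\leq k}(\ci))-\rank(V_{\leq k-1}(\ci))$, where some of the $\tilde{\lambda}_{k,j}$ could \emph{a priori} vanish. The exactness of the scaling is equivalent, by Corollary~\ref{cor:tightness}, to the nondegeneracy condition $\det(\tilde{R}W\tilde{R}^T)\neq 0$, where $W=W^{\mathcal{D}_{\sss}}_{\leq m}$ is the Wronskian of the Dirichlet kernel at $(\mathbf{0},\mathbf{0})$ and $\tilde{R}$ is the block-diagonal matrix from Lemma~\ref{lem:det-K}.

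Finally, I would invoke Lemma~\ref{lem:det}: under the standing hypothesis $\rank(V_{\leq m}(\sss))\geq n$, we have $\det(\tilde{R}W\tilde{R}^T)>0$. Combining this with Corollary~\ref{cor:tightness} yields $\tilde{\lambda}_{k,j}\neq 0$ for all admissible $k,j$, which is precisely the statement of the lemma.

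There is no real obstacle here; the proof is a short chain of citations to results already established in the paper. The only minor point to verify carefully is that the three results apply in this specific setting — namely that $\mathcal{D}_{\sss}$ satisfies the smoothness and symmetry hypotheses of Corollary~\ref{cor:tightness}, which is immediate from \eqref{eq:dirichlet-nd-def} and the symmetry of $\sss$. Consequently the proof can be written in just a few lines.
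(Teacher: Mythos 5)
Your proposal is correct and takes essentially the same route as the paper, which states the lemma as an immediate consequence of Corollary~\ref{cor:tightness} (applied to the symmetric, analytic kernel $\mathcal{D}_{\sss}$) combined with the nondegeneracy criterion $\det(\tilde{R}W\tilde{R}^T)>0$ established in Lemma~\ref{lem:det} under the hypothesis $\rank(V_{\leq m}(\sss))\geq n$. Your write-up merely spells out this chain of citations in slightly more detail.
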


\subsection{Geometric characterization condition and the uniform grid}
The rank condition in Lemma~\ref{lem:det} is nontrivial to verify for a given set $\sss$. In this section we provide an explicit example of such a set, namely, the uniform grid $\Gg_N$ in $\RR^d$. Our approach is a straightforward application of well-known results in multivariate polynomial interpolation, adapted to our setting.

We first recall some definitions and results from \cite{sauer2006}. {\color{black}For a set \(\Theta\), let \(\#\Theta\) denote its cardinality. For a square matrix \(M\), let \(|M|\) denote its determinant, i.e. \(|M|:=\det(M)\).}

\begin{definition}[\cite{sauer2006}, p.194]
    Let $\mathcal{P}$ be a linear subspace of the polynomial ring $\Pi := \RR\left[x_1,...,x_d\right]$. The polynomial interpolation problem with respect to a (finite) set of linearly independent functionals $\Theta \subset \Pi'$ ($\Pi'$ is the dual space {\color{black}of $\Pi$}) is said to be \textbf{poised} for $\mathcal{P}$, if for any $Y \in \RR^{\#\Theta}$ there exists a unique $f \in \mathcal{P}$ such that $\Theta f = Y$.
\end{definition}

\begin{theorem}[\cite{sauer2006} p.194]\label{thm:poised}
    For $\mathcal{P} \subset \Pi$, and a finite set $\Theta \subset \Pi'$ as above, the following statements are equivalent:
    \begin{enumerate}
        \item The polynomial interpolation problem with respect to $\Theta$ is poised for $\mathcal{P}$.
        \item $\dim \mathcal{P} = \#\Theta$ and $\Theta P = \big[ \theta p : \theta \in \Theta , p \in P\big]$ satisfies $|\Theta P| \neq 0$ for any basis $P$ of $\mathcal{P}$.
    \end{enumerate}
\end{theorem}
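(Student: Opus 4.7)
The plan is to recognize this statement as a direct reformulation of a standard linear algebra fact: a square linear system has a unique solution for every right-hand side if and only if its coefficient matrix is nonsingular. I would first fix a basis $P=(p_1,\dots,p_k)$ of $\mathcal{P}$, where $k=\dim\mathcal{P}$, and enumerate $\Theta=\{\theta_1,\dots,\theta_n\}$ with $n=\#\Theta$. Any $f\in\mathcal{P}$ expands uniquely as $f=\sum_{j=1}^k c_j p_j$, so the interpolation condition $\Theta f = Y$ becomes the linear system $(\Theta P)\,c = Y$, where $\Theta P=[\theta_i(p_j)]_{1\le i\le n,\,1\le j\le k}\in\RR^{n\times k}$. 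Poisedness is therefore equivalent to saying that this linear map from $\RR^k$ to $\RR^n$ is a bijection.

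For $(1)\Rightarrow(2)$: if the map $c\mapsto(\Theta P)c$ is bijective between finite-dimensional real vector spaces, then necessarily $k=n$, i.e.\ $\dim\mathcal{P}=\#\Theta$, and the resulting square matrix $\Theta P$ is invertible, giving $|\Theta P|\neq 0$. To pass from "some basis" to "any basis", note that if $P'$ is another basis then $P'=PM$ for some invertible change-of-basis matrix $M\in\RR^{k\times k}$; by linearity of each $\theta_i$, we have $\Theta P' = (\Theta P)M$ and hence $|\Theta P'|=|\Theta P|\,|M|\neq 0$.

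For $(2)\Rightarrow(1)$: assuming $\dim\mathcal{P}=\#\Theta=n$ and $|\Theta P|\neq 0$ for some basis $P$, the same change-of-basis identity shows the determinant is nonzero for every basis, and the square matrix $\Theta P$ is invertible. For any prescribed data $Y\in\RR^n$, set $c:=(\Theta P)^{-1}Y$ and define $f:=\sum_{j=1}^k c_j p_j\in\mathcal{P}$; this $f$ satisfies $\Theta f=Y$, and it is unique because any two interpolants would differ by an element of $\mathcal{P}$ annihilated by every $\theta_i$, which by invertibility of $\Theta P$ must be zero.

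There is no real obstacle in this proof: the entire content is the standard equivalence "bijective linear map $\iff$ invertible square matrix" combined with the elementary observation that changing basis multiplies $\Theta P$ on the right by an invertible matrix, so the nonvanishing of the determinant in (2) is well-defined (basis-independent). No further property of polynomials, of the evaluation functionals, or of the ambient ring $\Pi$ enters the argument.
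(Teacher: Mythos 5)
Your proof is correct, and it is the standard linear algebra argument: poisedness of the interpolation problem is precisely bijectivity of the linear map $\mathcal{P}\to\RR^{\#\Theta}$, $f\mapsto\Theta f$, which in coordinates with respect to a basis $P$ is the matrix $\Theta P$; bijectivity between finite-dimensional spaces forces $\dim\mathcal{P}=\#\Theta$ and nonsingularity of $\Theta P$, and the change-of-basis identity $\Theta P'=(\Theta P)M$ shows the determinant condition is basis-independent. Note, however, that the paper itself does not prove this statement: it is cited directly from \cite{sauer2006} (p.~194) and used as a black box, so there is no in-paper argument to compare against. Your write-up correctly supplies the elementary proof that the reference is implicitly relying on, and no gap is present.
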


For $\Theta_{\sss} := \{\delta_{\xi}\}_{\xi \in \sss}$ the set of point evaluation functionals on $\sss$, and the standard monomial basis $P_n$ of polynomials of total degree $\leq n$, we have in particular $\dim P_n=\#\sss={{n+d} \choose d}=p_n$, and the matrix $\Theta_{\sss} P_{\color{black}n} = V := V_{\leq n}(\sss)$ is our Vandermonde matrix. Therefore, if we can find a sample set $\sss$ such that the polynomial interpolation problem with respect to $\Theta_{\sss}$ is poised for {\color{black}$\mathcal{P}_n := \text{span}\{P_n\}$}, this would immediately imply that $|V| \neq 0$.  Examples of such situations are provided in \cite{chung1977, GascaMariano2000}, and it turns out that there is a general sufficient condition which ensures poisedness.

\begin{definition}[\cite{chung1977}]\label{def:gc}
    A lattice $\mathcal{J}:=\{x_1,...,x_N\}$ of $N=p_n$ distinct nodes of $\RR^d$ is said to satisfy the \textbf{Geometric Characterization Condition (GC)} (of degree $n$) if corresponding to each node $x_i$ there exist $n$ distinct hyperplanes $G_{i1},...,G_{in}$ such that:
    \begin{enumerate}
        \item $x_i$ does not lie on any of these hyperplanes, and
        \item all the other nodes in $\mathcal{J}$ lie on at least one of these hyperplanes.
    \end{enumerate}
\end{definition}
The following result is fundamental for our purposes. We give a short proof for completeness.
\begin{theorem}[Theorem 1 in \cite{chung1977}]\label{thm:uni-inter}
Let $\mathcal{J}:=\{x_1,...,x_N\}$ be a lattice  of $N=p_n$ distinct nodes of $\RR^d$. If $\mathcal{J}$ satisfies the GC condition as in Definition~\ref{def:gc}, then the polynomial interpolation problem with respect to ${\color{black}\Theta_{\mathcal{J}}}$ is poised for ${\color{black}\mathcal{P}_n}$.
\end{theorem}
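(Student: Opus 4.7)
My plan is to construct an explicit Lagrange basis from the hyperplanes supplied by the GC condition, and then invoke Theorem~\ref{thm:poised} to conclude poisedness.

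\textbf{Step 1 (building Lagrange polynomials).} For each $i=1,\dots,N$, the GC condition furnishes $n$ hyperplanes $G_{i1},\dots,G_{in}$. Write each $G_{ij}$ as the zero set of a nontrivial affine form $L_{ij}(x)$, i.e. a polynomial of total degree exactly $1$. Since $x_i\notin G_{ij}$ for every $j$, we have $L_{ij}(x_i)\neq 0$, and we may define
\[
    \ell_i(x):=\prod_{j=1}^{n}\frac{L_{ij}(x)}{L_{ij}(x_i)}\in P_n,
\]
which has total degree $n$.

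\textbf{Step 2 (Kronecker property).} By construction $\ell_i(x_i)=1$. For any other node $x_k$ with $k\neq i$, the GC condition asserts that $x_k$ lies on at least one of the hyperplanes $G_{i1},\dots,G_{in}$, so $L_{ij_0}(x_k)=0$ for some $j_0$, giving $\ell_i(x_k)=0$. Hence $\ell_i(x_k)=\delta_{ik}$.

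\textbf{Step 3 (applying Theorem~\ref{thm:poised}).} With $\Theta_{\mathcal{J}}=\{\delta_{x_i}\}_{i=1}^N$ we have $\#\Theta_{\mathcal{J}}=N=p_n=\dim P_n$. The $N$ polynomials $\ell_1,\dots,\ell_N\in P_n$ have evaluation matrix $[\ell_i(x_k)]_{i,k}=I_N$, so they are linearly independent, and because their count matches the dimension they form a basis $\mathcal{L}$ of $P_n$. Then $\Theta_{\mathcal{J}}\mathcal{L}=I_N$ has nonzero determinant, so by Theorem~\ref{thm:poised} the interpolation problem with respect to $\Theta_{\mathcal{J}}$ is poised for $P_n$.

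The only subtle point is verifying that the candidate basis $\mathcal{L}$ is indeed a basis of $P_n$, which is not automatic from the construction itself; however, this follows immediately once we observe that the evaluation matrix $\Theta_{\mathcal{J}}\mathcal{L}$ equals the identity, forcing linear independence and—by a dimension count—spanning of $P_n$. Thus no further work is required beyond the elementary construction above.
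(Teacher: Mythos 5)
Your proof is correct and follows essentially the same approach as the paper's: both construct the explicit Lagrange interpolation polynomials $\ell_i(x)=\prod_{j=1}^n G_{ij}(x)/G_{ij}(x_i)$ from the GC hyperplanes and exploit the Kronecker property. The paper is more terse, concluding uniqueness directly from existence for arbitrary data (via the dimension count), whereas you spell out the determinant criterion of Theorem~\ref{thm:poised} explicitly; the mathematical content is the same.
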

\begin{proof}
    Let $Y=\{y_i\}_{i=1,...,N}$ be an arbitrary vector of values. It can be directly verified that the following polynomial of total degree $\leq n$ interpolates $Y$ on $\mathcal{J}$:
    \[
        P(x) = \sum_{i=1}^N y_i \prod_{j=1}^n{{G_{ij}(x)}\over {G_{ij}(x_i)}},
    \]
    {\color{black}where \(G_{i1},\dots,G_{in}\) are distinct hyperplanes associated with \(x_i\in\mathcal J\), and they satisfy the conditions stated in Definition \ref{def:gc}.} Since $Y$ was arbitrary, uniqueness follows.
\end{proof}

Next, we recall the definition of $n$-th order principal lattice \cite{nicolaides1972}.
\begin{definition}
Let $I_n = \{0, {1\over n}, {2\over n}, ..., 1\}$. Further, denote by $X_i := e_i$ the simplex vertices with $e_i$ being the $i$-th element in standard basis of $\RR^d$, and furthermore put $X_0 := \mathbf{0}\in\RR^d$. Then the $n$-th order principal lattice (corresponding to the coordinate simplex) is defined as
\[
    B(n,d) := \biggl\{x \in \RR^d \Big| x = \sum_{i=0}^d \gamma_i X_i,\; \sum_{i=0}^d \gamma_i =1, \;\gamma_i \in I_n \biggr\}.
\]
Note that $B(n,d)$ contains precisely $p_n={n+d \choose n}$ points.
\end{definition}

It was proven in \cite{chung1977} Theorem 4, that $B(n,d)$ satisfies the GC condition which, by Theorem~\ref{thm:uni-inter}, guarantees poisedness \cite{GascaMariano2000}. This is almost what we want, and indeed as we will show, the uniform grid contains a scaled principal lattice.

\begin{lemma}\label{lem:rescaled-lattice}
    Let $J_{n}:=\{0,1,...,n\}$, and put
    \begin{equation}\label{def:A}
        A(n,d) := n \cdot B(n,d) = \biggl\{ x \in \RR^d \Big| x = \sum_{i=0}^d \gamma_i X_i, \sum_{i=0}^d \gamma_i =n, \gamma_i \in J_n \biggr\} \subset \Gg_n.
    \end{equation}
  
    Then the set $A(n,d)$ satisfies the GC condition of degree $n$.
\end{lemma}
\begin{proof}
    The proof is very similar to the one in \cite{chung1977} for $B(n,d)$, and we provide it for completeness.

    Let $\{\gamma_k:\RR^d\to\RR\}_{k=0,...,d}$ be the barycentric coordinate functions associated with the simplex $X_0,...,X_d$. Define the following hyperplanes $H_{km}:=\{x | \gamma_k(x) = m\}$, $k=0,...,d$, $m=0,...,n$. These hyperplanes contain all the nodes in $A(n,d)$.

    Now let $x\in A(n,d)$ be a lattice node, and let $\beta_k := \gamma_k(x)$. For each $k=0,..,d$ with $\beta_k > 0$, the union of the hyperplanes $\{H_{kj}\}_{k=0,...,d}^{j=0,...,\beta_k -1}$ doesn't contain the node $x$. Since $\sum_{k=0}^d\beta_k =n$ we have exactly $n$ hyperplanes in the set $\{H_{kj}\}_{k=0,...,d}^{j=0,...,\beta_k -1}$.

    We still have to show why the union of $\{H_{kj}\}_{k=0,...,d}^{j=0,...,\beta_k -1}$ contains all the other nodes. Let $y \neq x \in A(n,d)$. Again, we have $\sum_{k=0}^d \gamma_k(y)=n$, therefore there is at least one $k \in \{0,...,d\}$ such that $\gamma_k(y) < \gamma_k(x)=\beta_k$. \end{proof}

To clarify the proof, we give an example for $d=2, n=2$. For the node $x=(0,1)$ we have $\gamma_0(x)=1, \gamma_1(x)=0,\gamma_2(x)=1$ because $x = 1\cdot (0,0) + 0\cdot (1,0) +1\cdot (0,1)$. Following the proof, $H_{00}$ and $H_{20}$ are the hyperplanes which contain all the nodes except for $x$ (see figure \ref{fig:H}).
\begin{figure}
\begin{center}
    \includegraphics[width=0.4\textwidth]{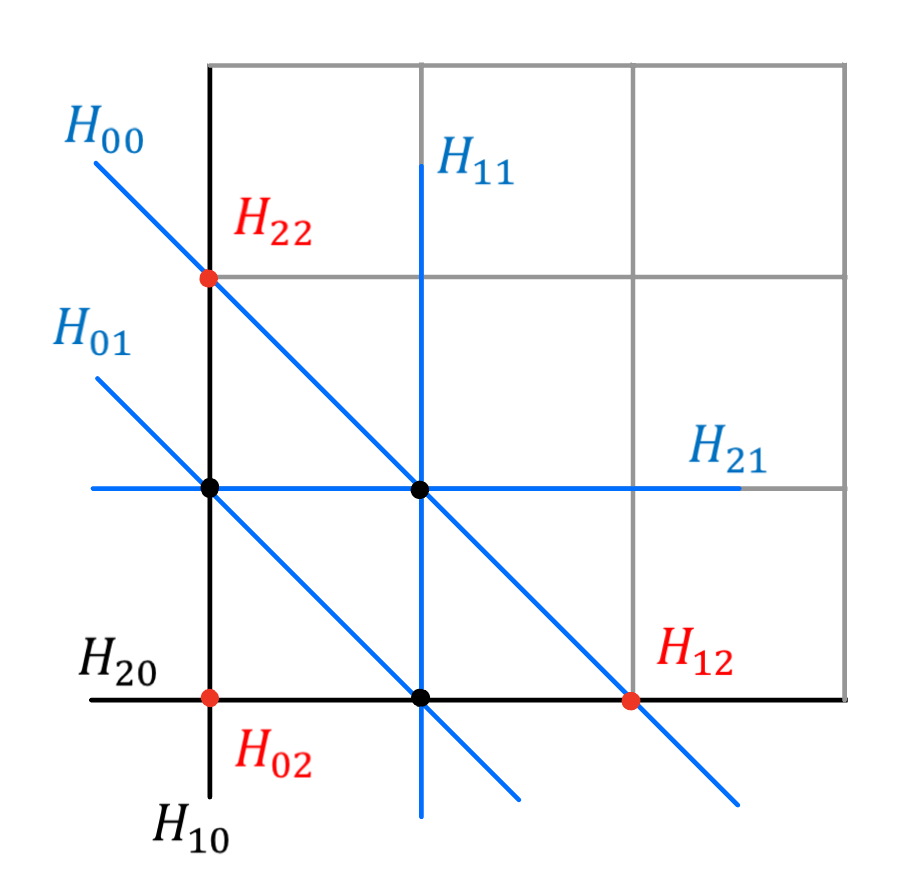}
\end{center}
\caption{Demonstration of the proof of Lemma~\ref{lem:rescaled-lattice}.}
\label{fig:H}
\end{figure}

\begin{remark} For the scaled principal lattice $A(n,d)$, the interpolation polynomial $P(x)$ of degree $\leq n$ for any $\{y_a\}_{a \in A(n,d)}$ is given by:
\[
    P(x)=\sum_{a \in A(n,d)}y_a\prod_{k=0}^d\prod_{m=0}^{\gamma_k(a)-1}{{\gamma_k(x)-m}\over {\gamma_k(a)-m}}.
\]
\end{remark}

{\color{black}
For any $t\in \RR^d$ and a set $A\subseteq \mathbb{R}^d$, we denote by $A_t$ the shifted set $A_t := \{a+t: a\in A\}$.
The following result is immediate.
    \begin{proposition}\label{prop:shifted-GC-set}
        If $A$  satisfies the GC condition of degree \(n\), then $A_t$ satisfies the GC condition of degree $n$ for any $t\in\mathbb{R}^d$.
    \end{proposition}

}

We are finally in a position to state and prove the main result of this section.

\begin{theorem}\label{thm:grid-is-good}
    Let $\ci \subset \RR^d$ be a finite set of $n$ distinct points in $\RR^d$, and let $m=\mu(\ci)$ be its discrete moment order. Let $\ell$ be the unique integer such that $p_{\color{black}2\ell-2}<n\leq p_{\color{black}2\ell-1}$. Then for every $N\geq \ell$ we have
    \begin{equation*}
        \rank(V_{\leq m}(\Gg_N)) \geq n,
    \end{equation*}
    so the conclusion of Lemma~\ref{lem:exact-scaling} holds for $\sss=\Gg_N$.
\end{theorem}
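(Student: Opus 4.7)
The plan is to reduce the desired rank statement to the simpler inequality $\rank(V_{\leq \ell}(\Gg_N))\geq p_\ell$, and then exhibit an explicit invertible $p_\ell \times p_\ell$ submatrix coming from a translate of the scaled principal lattice $A(\ell,d)$ sitting inside $\Gg_N$. The main tools will be Lemma~\ref{lem:rescaled-lattice} together with Theorems~\ref{thm:uni-inter} and~\ref{thm:poised}.

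First I would note that $m\geq \ell$. By Definition~\ref{def:mu-def}, the matrix $V_{\leq m}(\ci)\in\RR^{n\times p_m}$ has rank $n$, which forces $p_m\geq n$; since $\ell$ is the smallest integer with $p_\ell\geq n$, it follows that $m\geq \ell$. Hence $V_{\leq \ell}(\Gg_N)$ consists of the first $p_\ell$ columns of $V_{\leq m}(\Gg_N)$, and therefore $\rank(V_{\leq m}(\Gg_N))\geq \rank(V_{\leq \ell}(\Gg_N))$, so it suffices to bound the latter.

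Next, I would fit a suitable integer translate $\mathcal{J}:=A(\ell,d)+t$ of the scaled principal lattice inside $\Gg_N$. Since $A(\ell,d)$ is a set of $p_\ell$ integer points contained in the simplex $\{k\in\ZZ^d_+ : \sum_i k_i\leq \ell\}$, and the hypothesis $N\geq \ell$ ensures that $\Gg_N$ accommodates an integer box of side length at least $\ell$, such a $t\in\ZZ^d$ exists. The GC condition is clearly preserved under translation: the requisite hyperplanes for $\mathcal{J}$ are just the $H_{km}$ from the proof of Lemma~\ref{lem:rescaled-lattice} translated by $t$. Thus $\mathcal{J}\subset\Gg_N$ is a GC lattice of degree $\ell$ with $p_\ell$ points.

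Finally, Theorem~\ref{thm:uni-inter} implies that the polynomial interpolation problem with respect to $\mathcal{J}$ is poised for the space of polynomials of total degree at most $\ell$, and Theorem~\ref{thm:poised} applied with $\Theta = \Theta_{\mathcal{J}}$ and the standard monomial basis then gives $\det V_{\leq \ell}(\mathcal{J}) \neq 0$. Since $V_{\leq \ell}(\mathcal{J})$ is a $p_\ell \times p_\ell$ submatrix of $V_{\leq \ell}(\Gg_N)$ obtained by selecting the rows indexed by $\mathcal{J}$, we conclude $\rank(V_{\leq \ell}(\Gg_N))\geq p_\ell \geq n$, which combined with the first paragraph gives $\rank(V_{\leq m}(\Gg_N))\geq n$, as required. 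I expect the only delicate point to be the translation step — one has to carefully check the range of $\Gg_N$ against the diameter of $A(\ell,d)$ so that an integer translate indeed fits, and verify that the GC hyperplanes descend cleanly through translation — neither requires new ideas, but both demand a bit of bookkeeping.
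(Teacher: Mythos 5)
Your proposal is correct and follows essentially the same approach as the paper: show $m\geq\ell$, use the GC condition (Lemma~\ref{lem:rescaled-lattice}) together with Theorems~\ref{thm:poised} and~\ref{thm:uni-inter} to get a nonsingular $p_\ell\times p_\ell$ Vandermonde submatrix inside $\Gg_N$, and chain the rank inequalities. The only deviation is the translation step, which is unnecessary: Lemma~\ref{lem:rescaled-lattice} already records that $A(\ell,d)\subset\Gg_\ell$ (its points have coordinates in $\{0,\dots,\ell\}$, and the paper's $\Gg_N$ in Section~\ref{sec:vandermonde} is $\{0,\pm 1,\dots,\pm N\}^d$), so one may take $t=0$ and skip the bookkeeping you flag; your observation that GC is preserved under translation is true but not needed here.
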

\begin{proof}
    {\color{black}Let $s= -(\ell-1,\dots,\ell-1)^T \in \RR^d$ and define $A_s(2\ell -1, d) := A(2\ell -1,d) + s$. By Lemma \ref{lem:rescaled-lattice} and Proposition \ref{prop:shifted-GC-set} we have that $A_s(2\ell -1, d)$ satisfies the GC condition of degree $2\ell -1$.}
    Using  {\color{black}Theorem \ref{thm:uni-inter} {\color{black}(or alternatively Corollary \ref{cor:l-poised})} we get that the polynomial interpolation problem with respect to $\Theta_{A_s(2\ell -1, d)}$ is poised for $\text{span}\{P_{2\ell -1}\}$, thus by Theorem \ref{thm:poised} we get}
    \begin{align*}
        \big|V_{\leq {\color{black}2\ell-1}}(A_{\color{black}s}({\color{black}2\ell-1},d))\big|  \neq 0 & \implies \rank(V_{\leq {\color{black}2\ell-1}}(A_{\color{black}s}({\color{black}2\ell-1},d))) = {{{\color{black}2\ell-1}+d} \choose d} = p_{\color{black}2\ell-1} \geq n\\
        A_{\color{black}s}({\color{black}2\ell-1},d) \subset \Gg_{\ell} &\implies \rank(V_{\leq {\color{black}2\ell-1}}(\Gg_{\ell})) \geq {{{\color{black}2\ell-1}+d} \choose d} \geq n.
    \end{align*}
    Now notice that since $\rank V_{\leq m}(\ci)=n$ we must have $p_m \geq n$ and therefore $m \geq {\color{black}2\ell-1}$. This implies
    $$
        \rank(V_{\leq m}(\Gg_{N})) \geq \rank(V_{\leq m}(\Gg_{\ell})) \geq \rank(V_{\leq {\color{black}2\ell-1}}(\Gg_{\ell})) \geq n,
    $$
    finishing the proof.
\end{proof}

{\color{black}\begin{remark}
    Theorem \ref{thm:grid-is-good} can be reformulated without explicitly referring to the set $\mathcal{\ci}$ and its discrete moment order $m = \mu(\mathcal{\ci})$ by instead imposing the  assumption that $p_m \ge n$ for given $m,n$.
\end{remark}}

{\color{black}We now introduce an alternative condition for poisedness, following the work \cite{dyn2014multivariate}.}

{\color{black}\begin{definition}[\cite{dyn2014multivariate}]\label{def:lower-set}
    A finite set $L \subset \mathbb{N}_{0}^d$ is a \textit{lower set} if whenever $\mu \in L$ and $0 \leq \alpha \leq \mu$ then $\alpha \in L$, where $\alpha \leq \beta$ means that $\alpha_j \leq \beta_j$ for $j=1,\dots,d$.
\end{definition}

\begin{theorem}[Theorem 1 in \cite{dyn2014multivariate}]\label{thm:l-poised}
    Let $L$ be a lower set and $F_L := \{x_{\alpha} : \alpha\in L\}$. Then the polynomial interpolation problem with respect to $\Theta_{F_L}$ is poised for $\mathcal{P}_L:=\text{span}\{x^{\alpha} : \alpha \in L\}$.
\end{theorem}}
{\color{black}
    \begin{corollary}\label{cor:l-poised}
        The polynomial interpolation problem with respect to $\Theta_{A_s(2\ell -1, d)}$ is poised for $\mathcal{P}_{2\ell-1}:=\text{span}\{P_{2\ell -1}\}$, where $P_{2\ell -1}$ is the standard monomial basis of polynomials of total degree $\leq 2\ell -1$ and $s := -(\ell-1,\dots,\ell-1)^T$.
    \end{corollary}
    \begin{proof}
        It is easy to verify that the set $A(2\ell -1, d)$ is a lower set as in Definition~\ref{def:lower-set}. Now we show that $\mathcal{P}_{A(2\ell-1,d)}\equiv \mathcal{P}_{2\ell -1}$. By the definition of $A(2\ell-1,d)$ in \eqref{def:A}, any $\alpha \in A(2\ell-1,d)$ has the form $(\alpha_1,\dots,\alpha_d)$ where $\alpha_i \in \{0,1,\dots,2\ell -1\}$ and $\sum_{i=0}^d\alpha_i = 2\ell -1$ which is equivalent to $\sum_{i=1}^d\alpha_i \leq 2\ell-1$. Hence the set $\{x^{\alpha} : \alpha \in A(2\ell-1,d)\}$ is precisely the set of monomials of total degree $\leq d$.
        To complete the proof, let $F_{A(2\ell-1,d)} := \{\alpha + s : \alpha \in A(2\ell-1,d)\}$. It is easy to see that $F_{A(2\ell-1,d)} \equiv A_s(2\ell-1,d)$, therefore, using Theorem \ref{thm:l-poised}, we get the desired result.
    \end{proof}
Note that corollary \ref{cor:l-poised}, gives an alternative proof of the poisdness result in Theorem \ref{thm:grid-is-good}.
}

\section{Vandermonde matrices in the super-resolution regime}\label{sec:vandermonde}

In this section we come back to the super-resolution problem, and consider the Vandermonde matrices $U(x_1,\dots,x_n;\Gg_N)$ in \eqref{eq:Van} in the single cluster setting. We prove the multidimensional analogue of \cite[Theorem 2.3]{batenkov2021}, with the caveat that the geometry of the nodes stays fixed as $\srf\gg 1$. We start by defining the cluster geometry. Naturally, since the sampling set is an integer grid, we must restrict the nodes to $\TT^d:=[-\pi,\pi)^d \equiv \left(\RR \mod 2\pi\right)^d$ to avoid aliasing.

\begin{definition}
    For $x,y \in \TT^d$, we denote the wrap-around distance by
    \[ \|x-y\|_{\TT^d}:=\min_{r \in (2\pi \ZZ)^d}\|x-y+r\|_{\infty}. \]
\end{definition}

\begin{definition}
    Let $\mathcal{X}=\{x_1,...,x_n\} \subset \TT^d$. If for some $\tau>1$ and $0<\Delta<\pi/\tau$ we have
    \[ \forall x,y \in \mathcal{X}, x \neq y: \quad \Delta \leq \|x-y\|_{\TT^d} \leq \tau \Delta,\]
    then $\mathcal{X}$ is said to form an $(\Delta,\tau,n)$-cluster.
\end{definition}

In what follows we fix a node set $\di=\{y_1,\dots,y_n\} \subset \td$ such that the (non-wrapped) minimal distance between any two nodes is
$$
\rho = \rho(\di) := \min_{i\neq j} \|y_i-y_j\|_{\infty}.
$$

Now let $\ci=\Delta\di$ with $\Delta < \frac{1}{2\pi}$, so that $\ci$ forms an $(\Delta', \tau', n)$-cluster for $\Delta'=\rho\Delta$ and some $\tau'\leq \frac{2\pi}{\rho}$. (Otherwise, there exist $y,y'\in\di$ such that $\|y-y'\|_{\TT^d} \geq \frac{2\pi}{\rho}\Delta' = 2\pi\Delta$, a contradiction).

Further, let $\Gg_N = \{0,\pm 1,..., \pm N\}^d$ be the uniform symmetric grid of side length $2N+1$.

Let $U:=U(x_1,\dots,x_n;\Gg_N)$. Denote $\epsilon:=N\Delta$. We have
$$
U^* U = D(\ci,\Gg_N) = D\Bigl(\epsilon \frac{\di}{N},\Gg_N\Bigr) = D_{\epsilon}\Bigl(\frac{\di}{N},\Gg_N\Bigr). 
$$

We further define the rescaled Dirichlet kernel
\begin{align*}
\mathcal{K}(x,y)&=\mathcal{K}_N(x,y):=\frac{1}{(2N)^d} \mathcal{D}_{\Gg_N}(x/N,y/N) \\
\implies K_{\epsilon}(\di) &= \frac{1}{(2N)^d} D_{\epsilon}\Bigl(\frac{\di}{N},\Gg_N\Bigr) = D_N(\ci).
\end{align*}

We have the following result.

\begin{theorem}\label{thm:sr}
    With the above notations, let $m=\mu(\di)=\mu(\ci)$ be the discrete moment order of $\di$.
    
    For every $\epsilon_0>0,\alpha\in(0,1)$ there exists $N_0$ such that for all $N\geq N_0$ and $\alpha\epsilon_0 < N\Delta < \epsilon_0$,    
    the eigenvalues of $D_{N}$ split into $m+1$ groups
\[
\lambda_{0,0}(D_N) \asymp {(N\Delta)}^{0}, \quad \{\lambda_{1,j}(D_N)\}_{j=1}^{t_1}\asymp{(N\Delta)}^2, ... ,\{\lambda_{m,j}(D_N)\}_{j=1}^{t_m}\asymp{(N\Delta)}^{2m},
\]
where $t_k:=\rank(V_{\leq k}(\ci))-\rank(V_{\leq k-1}(\ci))$, $k=1,\dots,m$. {\color{black}Here $\asymp$ means 'exact scaling' as defined in Corollary \ref{cor:tightness}.}                     
\end{theorem}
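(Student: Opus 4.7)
The plan is to apply Theorem~\ref{Thm:main} together with Corollary~\ref{cor:tightness} to the rescaled kernel $\mathcal{K}_N$ at the \emph{fixed} node set $\di$, with scaling parameter $\epsilon = N\Delta$. The identity $D_N(\ci) = (K_N)_{N\Delta}(\di)$ recorded before the statement turns the theorem into a question about a family of kernel matrices parameterized by $N$ and $\epsilon$, and the task becomes: show that both the implicit $O(\cdot)$ constants in Theorem~\ref{Thm:main} and the nondegeneracy constant $\det(\tilde{R}\,W^{\mathcal{K}_N}_{\leq m}\tilde{R}^T)$ from Lemma~\ref{lem:det-K} are controlled \emph{uniformly} in $N$ as $N\to\infty$.

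My first step would be to identify the limiting kernel. Writing $\mathcal{K}_N(x,y) = \prod_{j=1}^d \frac{1}{2N}\mathcal{D}_{\{-N,\dots,N\}}((x_j-y_j)/N)$ as a tensor product of rescaled one-dimensional Dirichlet kernels, each one-dimensional factor is a Riemann sum for $\int_{-1}^{1} e^{iut}dt = 2\sinc(u)$, so that
\begin{equation*}
\mathcal{K}_N(x,y)\;\xrightarrow[N\to\infty]{}\;\mathcal{K}_\infty(x,y) := \prod_{j=1}^d \sinc(x_j-y_j) = \frac{1}{2^d}\int_{[-1,1]^d} e^{i\langle u,x-y\rangle}\,du.
\end{equation*}
Both $\mathcal{K}_N$ and $\mathcal{K}_\infty$ are entire in $x-y$, and Riemann sum error estimates for analytic functions give convergence in $\mathcal{C}^{(\infty,\infty)}(\Omega)$ on any compact $\Omega \subset \RR^d$, hence uniform bounds on every partial derivative of $\mathcal{K}_N$ for $N\ge N_0$. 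This uniformity feeds directly into the Taylor expansion \eqref{eq:taylor} applied to $\mathcal{K}_N$: all three remainder matrices $W_1(\epsilon), W_2(\epsilon), W_3(\epsilon)$ are uniformly $O(1)$ in $N$, which upgrades the conclusion of Theorem~\ref{Thm:main} to the uniform upper bound $\lambda_{k,j}(D_N(\ci)) \le C_k (N\Delta)^{2k}$ for $\epsilon \in (0,\epsilon_0)$.

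For the matching lower bound I would use Corollary~\ref{cor:tightness}, which reduces tightness of the scaling to checking $\det(\tilde{R}\,W_N\,\tilde{R}^T)\neq 0$, where $W_N := W^{\mathcal{K}_N}_{\leq m}$ and $\tilde{R}$ is the full-rank matrix from Lemma~\ref{lem:det-K} (depending only on $\di$). The crucial algebraic input is positive definiteness of the limiting Wronskian $W_\infty := W^{\mathcal{K}_\infty}_{\leq m}$. Differentiating the integral representation above under the sign gives
\begin{equation*}
W_\infty = \frac{1}{2^d}\biggl[\,\int_{[-1,1]^d}\frac{(iu)^\alpha(-iu)^\beta}{\alpha!\,\beta!}\,du\biggr]_{\alpha,\beta \in \PP_m}
= \frac{1}{2^d}\int_{[-1,1]^d} g(u)\, g(u)^*\, du,
\end{equation*}
with $g(u) := \bigl(i^{|\alpha|}u^\alpha/\alpha!\bigr)_{\alpha \in \PP_m}$. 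The coordinate functions of $g$ are linearly independent on the set $[-1,1]^d$ of positive measure, so $W_\infty$ is a Gram matrix of linearly independent vectors and therefore strictly positive definite; since $\tilde{R}$ has full row rank $n$, $\tilde{R}\,W_\infty\,\tilde{R}^T$ is positive definite as well. Entrywise convergence $W_N \to W_\infty$ and continuity of the determinant then give $\det(\tilde{R}\,W_N\,\tilde{R}^T) \ge \tfrac{1}{2}\det(\tilde{R}\,W_\infty\,\tilde{R}^T) > 0$ for $N\ge N_0$, so Corollary~\ref{cor:tightness} provides non-zero leading coefficients $\tilde\lambda_{k,j}(N)$ bounded away from $0$ uniformly in $N$. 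Shrinking $\epsilon_0$ if necessary so that the uniform $O(\epsilon)$ corrections cannot cancel the leading term on $[\alpha\epsilon_0,\epsilon_0]$ yields the matching lower bound $\lambda_{k,j}(D_N(\ci)) \ge c_k(N\Delta)^{2k}$, completing $\asymp (N\Delta)^{2k}$.

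The main obstacle is the quantitative uniformity in $N$ of every $O(\cdot)$ statement used: Theorem~\ref{Thm:main}, Lemma~\ref{lem:det-K}, and Corollary~\ref{cor:tightness} are all stated for a single fixed kernel, whereas here the kernel itself depends on $N$. Making the argument rigorous requires re-examining their proofs to replace pointwise $O(\cdot)$ control by explicit bounds in terms of uniform $\mathcal{C}^{(\infty,\infty)}$-norms of $\mathcal{K}_N$ on a fixed compact neighborhood of $\epsilon_0\di$, and then invoking the compactness of the range $\epsilon \in [\alpha\epsilon_0,\epsilon_0]$ to choose a single $N_0$ that works simultaneously for the upper bound, lower bound, and remainder estimates.
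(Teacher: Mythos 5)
Your proposal targets the right limiting object (the tensor $\sinc$ kernel), and the Wronskian-as-Gram-matrix computation at the end is essentially identical to the one in the paper. But the mechanism by which you transfer the flat-limit asymptotics from $N=\infty$ back to finite $N$ differs from the paper's, and your version leaves as an acknowledged ``obstacle'' exactly the work the paper is organized to avoid.

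The paper sidesteps your uniformity problem entirely: rather than re-examining the interior $O(\cdot)$ estimates in Theorem~\ref{Thm:main}, Lemma~\ref{lem:det-K} and Corollary~\ref{cor:tightness} for uniformity in $N$, it applies those results \emph{only} to the fixed limiting kernel $\Sinc$, establishing the exact eigenvalue asymptotics for $\Sinc_\epsilon(\di)$ with constants $\tilde\lambda_{s,j}$ that manifestly do not depend on $N$. It then proves that the matrix entries $(D_N)_{i,j}$ converge to $(\Sinc_\epsilon(\di))_{i,j}$ \emph{uniformly over $\epsilon\in[0,\epsilon_0]$} (pointwise convergence plus equicontinuity in $\epsilon$, via an Arzel\`a--Ascoli-type lemma), and invokes the Bauer--Fike theorem to conclude that every eigenvalue of $D_N$ is within $\delta'$ of an eigenvalue of $\Sinc_\epsilon(\di)$, uniformly in $\epsilon$. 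Choosing $\delta'=O\bigl((\alpha\epsilon_0)^{2m}\bigr)$, smaller than the smallest eigenvalue group of $\Sinc_\epsilon$ over $\epsilon\in[\alpha\epsilon_0,\epsilon_0]$, then forces the eigenvalues of $D_N$ into the same $\asymp(N\Delta)^{2k}$ bands. The perturbation-theoretic work stays at the matrix level, where it is elementary, instead of inside the Taylor-expansion machinery.

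Your route is also viable but has to carry more. You need uniform-in-$N$ control of $W_1(\epsilon)$, $W_2(\epsilon)$, $W_3(\epsilon)$ inside \eqref{eq:taylor}, which requires reopening the derivation in the Appendix. More delicately, the step from ``$\det(\tilde R W_N \tilde R^T)$ is bounded away from zero'' to ``the $\tilde\lambda_{k,j}(N)$ are bounded away from zero uniformly in $N$'' is not automatic: the determinant is the \emph{product} of the $\tilde\lambda$'s, and a lower bound on the product does not by itself bound any single factor away from zero. You would need a separate continuity-of-spectrum argument using $W_N\to W_\infty$ together with positive definiteness of the limit. These gaps are bridgeable, but the paper's Bauer--Fike detour accomplishes the same thing in one stroke without re-auditing the proof of Theorem~\ref{Thm:main}.
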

\begin{proof}
    With the identification $D_N(\ci) = K_{\epsilon}(\di)$ as above, we would like to apply Theorem~\ref{Thm:main} and Corollary~\ref{cor:tightness} to the kernel $\mathcal{K}=\mathcal{K}_N$. However, $\mathcal{K}$ depends on $N$, so we need to make sure that all the estimates in the proofs are uniform in $N$.

        \begin{definition}[sinc kernel]
            Let $\sinc(x):= \begin{cases} 
                                \frac{\sin(x)}{x}, & \text{if } x \neq 0, \\
                                 1, & \text{if } x = 0.
                            \end{cases}$. Given $\ci \in \RR^d$, the corresponding sinc kernel matrix is defined as follows:
            \begin{equation}
                \Sinc(\ci) := \biggl[ \prod_{i=1}^d\sinc(x_i-x_i')\biggr]_{x,x' \in \ci}
            \end{equation}
            For $\epsilon > 0$, the flat limit version of the $\sinc$ kernel matrix is:
            \begin{equation}
                \Sinc_{\epsilon}(\ci) := \biggl[ \prod_{i=1}^d\sinc(\epsilon x_i-\epsilon x_i')\biggr]_{x,x' \in \ci}
            \end{equation}
        \end{definition}
        Outline of the proof: we aim to prove that the eigenvalues of $D_N$ converge to $\Sinc$ kernel when $N\to\infty$. The scaling of the eigenvalues of $\Sinc$ can be obtained using Corollary \ref{cor:tightness} (the constants $\tilde{\lambda}_{k,j}$ don't depend on $N$). First we show element-wise convergence of $D_N$ to $\sinc$ and equicontinuity of $D_N$ in $\epsilon < \epsilon_0$ to show uniform convergence of $D_N$ to $\sinc$ (uniform in $\epsilon < \epsilon_0$ and independent of $N$). Then using the Bauer–Fike Theorem, we prove convergence of the eigenvalues.
        \par We start with some definitions.
        \begin{theorem}[The Bauer–Fike Theorem]\label{thm:b-f}
            For a diagonalizable matrix $A$ let $E$ be it's eigenvectors matrix, $A=E\Lambda {E}^{-1}$ where $\Lambda$ is a diagonal matrix. Let $\mu$ be an eigenvalue of $A_N$, then there exists $\lambda \in \Lambda(A)$ such that 
            \begin{equation}
                |\lambda - \mu| \leq \kappa_p(E)\|A-A_N\|_p
            \end{equation}
            where $\kappa_p(E) := \|E\|_p\|{E}^{-1}\|_p$ is the condition number of $E$ associated with the norm $\|\cdot\|_p$.
        \end{theorem}
        \begin{definition}[Element-wise convergence]
           Let \(\{A_N\}_{N=1}^{\infty}\), where \(A_N \in \mathbb{R}^{s \times s}\), be a sequence of matrices, and let \(A \in \mathbb{R}^{s \times s}\) be a matrix. The sequence \(\{A_N\}_{N=1}^{\infty}\) is said to converge element-wise to \(A\) if the following condition is satisfied:

            For every \(\delta > 0\), there exists a positive integer \(M(\delta)\) such that for all \(N \geq M(\delta)\), the inequality
            \[
                \big|(A_N)_{i,j} - (A)_{i,j}\big| \leq \delta
            \]
            holds for all \(1 \leq i, j \leq s\).

        \end{definition}
        \begin{definition}[Equicontinuity]\label{def:equic}
            Let $\{f_n(x)\}$ be a sequence of functions. $\{f_n(x)\}$ is equicontinuous if for every $\delta > 0$, there exists $\nu > 0$ (independent of $n$) such that for all $n$ and all $x, y \in [a, b]$ with $|x - y| < \nu$ we have $|f_n(x) - f_n(y)| < \delta$.
        \end{definition}
        \begin{definition}[Uniform convergence]
            A sequence of functions $\{ f_n(x) \}$ converges uniformly to $f(x)$ on $S$ if for every $\epsilon > 0$, there exists an integer $N$ such that for all $n \geq N$ and for all $x \in S$, $| f_n(x) - f(x) | < \epsilon$.
        \end{definition}
        \begin{theorem}\label{thm:uni-con}
            Let $C(J)$ be the space of continuous functions on a closed interval $J \subset \RR$. A sequence in $C(J)$ is uniformly convergent if and only if it is equicontinuous and converges pointwise to a function.
        \end{theorem}
        \begin{proof}
            The first direction is an immediate corollary of  {\color{black}\cite[Theorem 7.25]{rudin1964}}. Since the sequence converges pointwise, we can replace the subsequence in the proof of (b) by the sequence itself. The second direction is Theorem 7.24.
        \end{proof}
        \begin{lemma}\label{lem:dir-con}\label{lem:poin}
            For every $\epsilon > 0$ and $\ci \in \RR^d$, the Dirichlet kernel matrix $D_N(\ci)$ converges element-wise to the $\sinc$ kernel matrix $\Sinc_{\epsilon}(\mathcal{Y})$ when $N \to \infty$.
        \end{lemma}
        \begin{proof}
            \begin{equation}
                \big(D_N(\ci)\big)_{i,j} = \frac{1}{(2N)^d} \big(D_{\epsilon}\Bigl(\frac{\di}{N},\Gg_N\Bigr)\big)_{i,j} = \frac{1}{(2N)^d}\mathcal{D}_{\Gg_N}(\epsilon({y_i \over N} - {y_j \over N})) = \frac{1}{(2N)^d}\sum_{\omega \in \Gg_N}\exp\left( \imath \langle \epsilon({y_i \over N} - {y_j \over N}),\omega \rangle \right)
            \end{equation}
            Denote by $y:=y_i - y_j$ and $y:=(y^{(1)},\dots,y^{(d)})$. First, we consider the case where $d=1$:
            \begin{align*}
                &\frac{1}{2N}\sum_{k=-N}^N \exp(\imath \epsilon k{y \over N}) = \frac{1}{2N}(1+2\sum_{k=1}^N \cos{(\epsilon k{y \over N})}) = \frac{1}{2N}\frac{\sin((N+{1\over 2}){\epsilon y \over N})}{\sin({\epsilon y \over 2N})} \\
                & \to_{N\to \infty} \frac{1}{2N}\frac{\sin((1+{1\over 2N})\epsilon y)}{{\epsilon y \over 2N}} \to_{N\to \infty} {\sin(\epsilon y) \over \epsilon y}= \sinc(\epsilon y).
            \end{align*}
            For $d > 1$, using the above, we get
            \begin{align*}
                \frac{1}{(2N)^d}\sum_{\omega \in \Gg_N}\exp\left( \imath \langle \epsilon({y \over N}),\omega \rangle \right) = \prod_{i=1}^d \frac{1}{2N}\sum_{k=-N}^N \exp(\imath \epsilon k{y^{(i)} \over N}) \to_{N\to \infty} \prod_{i=1}^d \sinc(\epsilon y^{(i)}).
            \end{align*}
        \end{proof}

            We proved that each entry of $D_N(\ci)$ converges point-wise to each entry of $\Sinc_{\epsilon}(\mathcal{Y})$. Now we show that this convergence is uniform for $\epsilon \in [0, \epsilon_0]$ for a given $\epsilon_0$. We show that using Theorem \ref{thm:uni-con}. First we need to show that the sequence $\{\big(D_N(\ci)\big)_{i,j}\}$ is equicontinuous.
            \par For fixed $y \in \mathcal{Y}$, let $f_N(\epsilon) := \frac{1}{2N}\sum_{k=-N}^N \exp(\imath \epsilon k{y \over N})$ and $f(\epsilon):=\sinc(\epsilon y)$. We prove the following result.
            \begin{lemma}\label{lem:equi}
                Let  $\epsilon_0 > 0$ be given. For every $\delta > 0$ there exists $\nu > 0$ (independent of $N$) such that for all $N$ and all $\epsilon, \epsilon ' \in [0, \epsilon_0]$ with $|\epsilon - \epsilon '| < \nu$ we have $|f_n(\epsilon) - f_n(\epsilon ')| < \delta$.
            \end{lemma}
            \begin{proof}
                Let $\delta > 0$. we want to find $\nu(\delta) > 0$ such that for all $N$ and $\epsilon, \epsilon ' \in [0, \epsilon_0]$ with $|\epsilon - \epsilon '| < \nu$ we have that $|f_N(\epsilon) - f_N(\epsilon ')| < \delta$. Using that $|1-e^{ix}| = 2|\sin({x \over 2})|$ and $\sin(x) \leq x$ for $x \in [0,{\pi \over 2}]$ we get
                \begin{align*}
                    |f_N(\epsilon) - f_N(\epsilon ')| &= |\frac{1}{2N}\sum_{k=-N}^N \exp(\imath \epsilon k{y \over N}) - \frac{1}{2N}\sum_{k=-N}^N \exp(\imath \epsilon ' k{y \over N})| \\
                    &= | \frac{1}{2N}\sum_{k=-N}^N \exp(\imath \epsilon k{y \over N})(1 - \exp(\imath (\epsilon - \epsilon ') k{y \over N})) | \\
                    &\leq \frac{1}{2N}\sum_{k=-N}^N|1 - \exp(\imath (\epsilon - \epsilon ') k{y \over N})| \\
                    &= \frac{1}{2N}\sum_{k=-N}^N 2|\sin((\epsilon - \epsilon ') k{y \over 2N})| \\
                    &\leq \frac{2}{2N}\sum_{k=0}^N 2k|\epsilon - \epsilon '|{y \over 2N} = {1 \over N^2}{N+N^2 \over 2}\nu y \\
                    &= ({1\over 2N} +{1\over 2})y\nu \leq y\nu.
                \end{align*}
                Thus we choose $\nu := {\delta \over y}$. The same holds for $f_N(\epsilon) : =\frac{1}{(2N)^d}\sum_{\omega \in \Gg_N}\exp\left( \imath \langle \epsilon({y \over N}),\omega \rangle \right)$.
            \end{proof}
        Using Lemma \ref{lem:poin} and \ref{lem:equi} together with Theorem \ref{thm:uni-con} we get the following Corollary.
        \begin{corollary}\label{cor:uni-conv}
            Let $\epsilon_0$ be given. For every $\delta > 0$ there exists $M=M(\epsilon_0,\delta)$ such that for all $N > M$
            \begin{equation}
                \big|\big(D_N(\ci)\big)_{i,j} - \big(\Sinc_{\epsilon}(\mathcal{Y})\big)_{i,j}\big| \leq \delta,\quad \forall \epsilon \in [0, \epsilon_0]. 
            \end{equation}
            
            \begin{proof}
                We take $\nu$ in Definition~\ref{def:equic} to be the minimum of all the $\nu_{i,j}^{(\ell)}$ according to the proof of Lemma~\ref{lem:equi}, with $\ell=1,\dots,d$. Note that for $y_{i,j}^{(\ell)}=0$ we have $\nu_{i,j}^{(\ell)}=\infty$.
            \end{proof}
            
        \end{corollary}
        \begin{corollary}[Uniform convergence of eigenvalues]
        
        Let $\epsilon_0>0$ be given. For every $\delta '>0$ there exists $M=M(\epsilon_0,\delta ')$ s.t. for all $N>M$ there is an eigenvalue $\mu_k^{(N)}$ of $D_N(\ci)$ with
                    \begin{equation}\label{eq:eig-pert-sinc-dirichlet}
                |\lambda^{(\epsilon)}_k - \mu_k^{(N)}| \leq \delta ', \quad \forall \epsilon\in[0,\epsilon_0]
            \end{equation}
            where $\lambda^{(\epsilon)}_k$ is an eigenvalue of $\Sinc_{\epsilon}(\mathcal{Y})$.
        
        \end{corollary}
        \begin{proof}
            Since $\Sinc_{\epsilon}(\mathcal{Y})$ is a symmetric matrix, the eigenvectors matrix $E$ is orthogonal in the eigenvalue decomposition of $\Sinc_{\epsilon}(\mathcal{Y})$ i.e $\Sinc_{\epsilon}(\mathcal{Y})=E\Lambda {E}^{-1}$. Thus we have $\kappa_2(E) = 1$. Using the Bauer–Fike theorem \ref{thm:b-f} with Corollary \ref{cor:uni-conv} and $\delta := {\delta ' \over n\sqrt{n}}$, we get:
            \begin{align*}
                &|\lambda - \mu| \leq \|\Sinc_{\epsilon}(\mathcal{Y}) - D_N(\ci)\|_2 \\
                &\leq \sqrt{n}\|\Sinc_{\epsilon}(\mathcal{Y}) - D_N(\ci)\|_{\infty} \\
                &= \sqrt{n}\max_i\sum_{j=1}^n\big|(\Sinc_{\epsilon}(\mathcal{Y}))_{i,j} - (D_N(\ci))_{i,j}\big| \\
                &\leq \sqrt{n}n\delta = \delta '.
            \end{align*}
        \end{proof}
        To complete the proof of Theorem~\ref{thm:sr}, we choose $\delta'=O\biggl((\alpha \epsilon_0)^{2m}\biggr)$  in \eqref{eq:eig-pert-sinc-dirichlet} and apply Corollary~\ref{cor:tightness}.
        The only remaining question is to show that $\det(RWR) \neq 0$. We will show that the limit $N\to\infty$ the rank of $W=\lim_{N\to\infty} \frac{1}{(2N)^d} W_N$ is full, and as a result, $\det\left(\tilde{R}W\tilde{R}^T\right)$ is uniformly bounded away from zero. By definition of the Wronskian, we write
        \begin{align*}
        \frac{1}{(2N)^d}W_N &= \frac{1}{(2N)^d}\bigg[{{\mathcal{D}_{\Gg_N}^{(\alpha,\beta)}({x \over N},{y \over N})|_{x,y=0}}\over{\alpha! \beta!}}\bigg]_{\alpha,\beta \in \PP_m} = \bigg[\frac{1}{(2N)^d}\sum_{\omega \in \Gg_N}{1\over N^{|\alpha|+|\beta|}}{{(i\omega)^{\alpha}(-i\omega)^{\beta}}\over{\alpha! \beta!}}\bigg]_{\alpha,\beta \in \PP_m} \\
        &= \bigg[\frac{1}{(2N)^d}\sum_{\omega =(\omega_1,\dots,\omega_d)\in \Gg_N}{{(i)^{|\alpha|}(-i)^{|\beta|}}\over{\alpha! \beta!}}\big({\omega_1 \over N}\big)^{\alpha_1+\beta_1}\cdot ... \cdot \big({\omega_d \over N}\big)^{\alpha_d+\beta_d}\bigg]_{\alpha,\beta \in \PP_m}\\
        &= \bigg[ \prod_{j=1}^d \sum_{k=-N}^N \frac{1}{2N}\frac{(i)^{\alpha_j}(-i)^{\beta_j}}{\alpha_j! \beta_j!}\big({k\over N}\big)^{\alpha_j+\beta_j}\bigg]_{\alpha,\beta \in \PP_m}.
     \end{align*}
     We get that each entry of $\frac{1}{(2N)^d}W_N$ is a Riemann sum, thus as $N \to \infty$ we get a multiple integral:
     \[
        \lim_{N \to \infty} \prod_{j=1}^d \sum_{k=-N}^N \frac{1}{2N}\frac{(i)^{\alpha_j}(-i)^{\beta_j}}{\alpha_j! \beta_j!}\biggl({k\over N}\biggr)^{\alpha_j+\beta_j} = {1 \over 2^d}\int_{[-1,1]^d}{{(i)^{|\alpha|}(-i)^{|\beta|}}\over{\alpha! \beta!}}x^{\alpha+\beta} \,dx
     \]
     Thus we get:
     \[
        W =\lim_{N\to\infty} \frac{1}{(2N)^d}W_N = \bigg[ {1 \over 2^d}\int_{[-1,1]^d}{{(i)^{|\alpha|}(-i)^{|\beta|}}\over{\alpha! \beta!}}x^{\alpha+\beta} \,dx \bigg]_{\alpha,\beta \in \PP_m}.
     \]
        It is immediately seen that $W$ is none other than the Gram matrix for the monomials $\{\boldsymbol{x}^{\boldsymbol{\alpha}}\}_{\boldsymbol{\alpha}\in\PP_m}$:
        \[
            W_{\alpha,\beta} = \langle x^{\alpha}, x^{\beta} \rangle = {1 \over 2^d}\int_{[-1,1]^d}{{(i)^{|\alpha|}(-i)^{|\beta|}}\over{\alpha! \beta!}}x^{\alpha+\beta} \,dx.
        \]
        Since the set of monomials is linearly independent, we have that $W$ is positive definite. Thus $W$ is full rank, and as a result, $\det\left(\tilde{R}W\tilde{R}^T\right) > 0$.
        This completes the proof of Theorem~\ref{thm:sr}.
    \end{proof}

    \begin{remark} \label{rem:weilin}During the final stages of preparation of our manuscript, we became aware that Weilin Li is working on a related topic  \cite{li2024a}. Relative to our results, more explicit conditions and constants are derived for the scaling of the smallest singular value of the multivariate Vandermonde matrix (we consider the entire spectrum); multi-cluster geometry is investigated; only cube and spherical sampling sets are considered; the decay rate in the case of uniform dilations of generic sets is not sharp; finally, a form of geometric characterization condition is used to describe the node geometry (as opposed to the sequence $\{t_k\}$ in our work). \end{remark}


     
\section{Numerical Experiments}\label{sec:numerics}
    In this section, we describe some numerical example to validate our results and investigate their applicability to the problem of super-resolution.
    
    \subsection{Dirichlet kernel eigenvalue scaling}

    \begin{figure}
        \centering
        \begin{subfigure}[b]{0.45\textwidth}
            \includegraphics[width=0.95\textwidth]{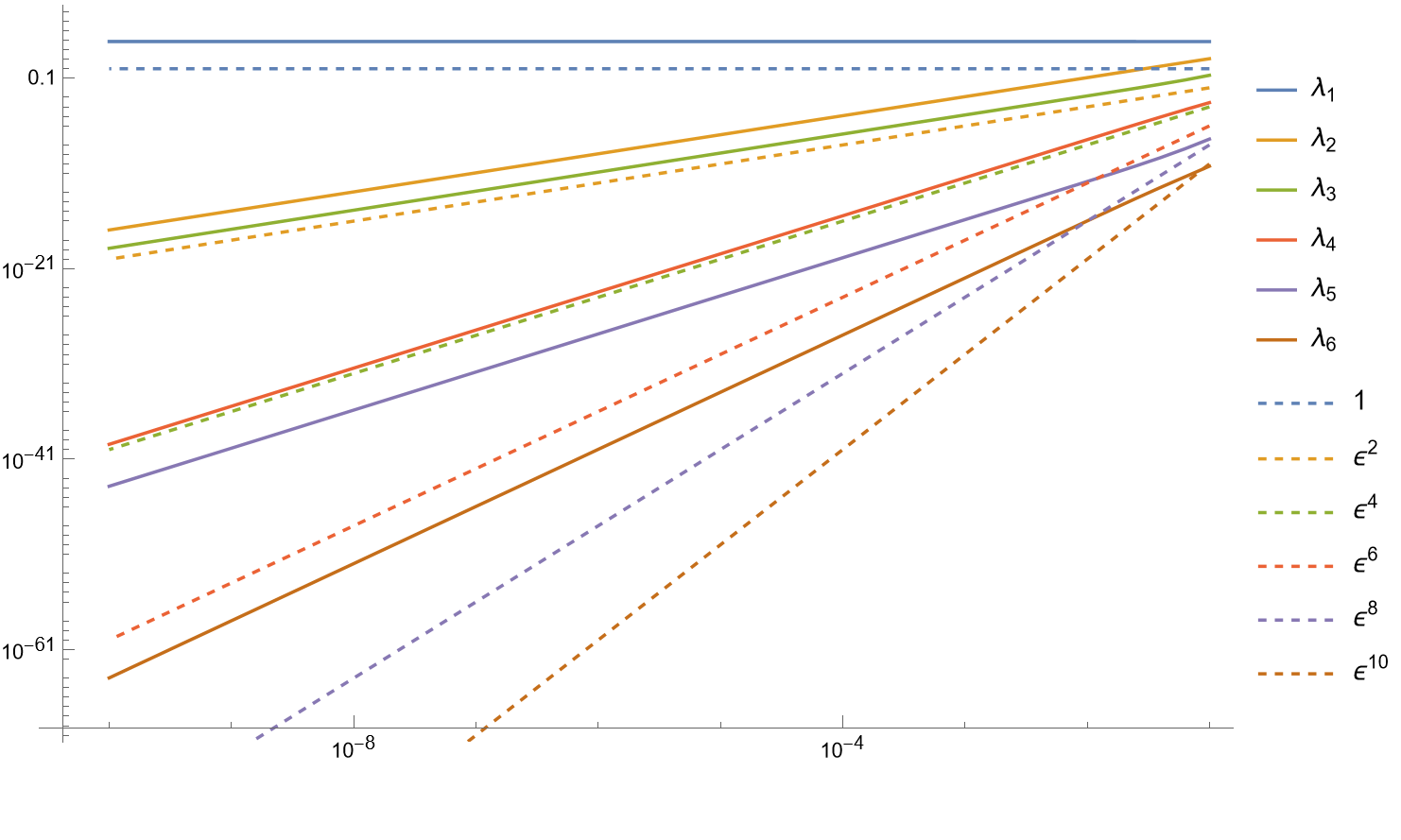}
            \caption{Parabola geometry.}
            \label{fig:alg-var}
        \end{subfigure}
        \begin{subfigure}[b]{0.45\textwidth}
            \includegraphics[width=0.95\textwidth]
        {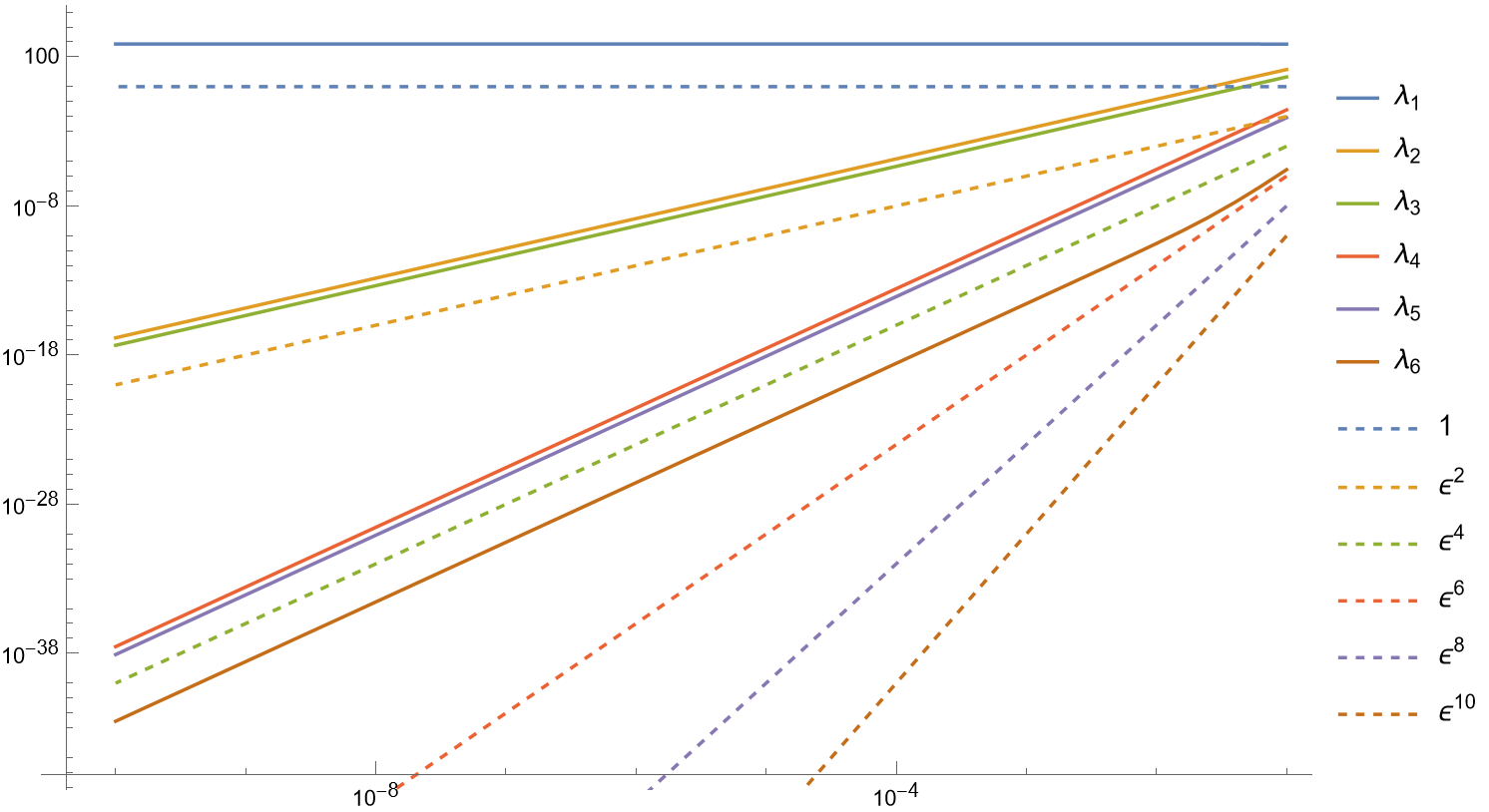}
            \caption{General position geometry. }
            \label{fig:non-alg-var}
        \end{subfigure}
        \caption{Eigenvalues of Dirichlet kernel with samples on the grid for the different node geometries. The $x$-axis is $\epsilon$ denoting the smallest distance between the nodes.}
    \end{figure}

    We start with the numerical study of the eigenvalues 
    of the Dirichlet matrix $D_{\epsilon}(\ci,\Gg_n)$, confirming the scaling predicted  by Lemma \ref{lem:exact-scaling}.
    Let $d=2$, $n=6$ and $\mathcal{X}=\{(x_i,y_i)\}_{i=1}^n$. We checked two scenarios: 
    \begin{enumerate}
        \item All points satisfy $y_i=x_i^2$. We get that $\rank(V_{\leq 3})=6$, where
        \begin{align*} V_{\leq 3} &= \left[\begin{array}{c|cc|ccc|cccc}
                                1 & x_1 & y_1 & x_1^2 & x_1y_1 & y_1^2 & x_1^3 & x_1^2y_1 & x_1y_1^2 & y_1^3 \\
                                1 & x_2 & y_2 & x_2^2 & x_2y_2 & y_2^2 & x_2^3 & x_2^2y_2 & x_2y_2^2 & y_2^3 \\
                                1 & x_3 & y_3 & x_3^2 & x_3y_3 & y_3^2 & x_3^3 & x_3^2y_3 & x_3y_3^2 & y_3^3 \\
                                1 & x_4 & y_4 & x_4^2 & x_4y_4 & y_4^2 & x_4^3 & x_4^2y_4 & x_4y_4^2 & y_4^3 \\
                                1 & x_5 & y_5 & x_5^2 & x_5y_5 & y_5^2 & x_5^3 & x_5^2y_5 & x_5y_5^2 & y_5^3 \\
                                1 & x_6 & y_6 & x_6^2 & x_6y_6 & y_6^2 & x_6^3 & x_5^2y_5 & x_6y_6^2 & y_6^3
                        \end{array}\right] \\ &=
                        \left[\begin{array}{c|cc|ccc|cccc}
                                1 & x_1 & x^2_1 & x_1^2 & x_1^3 & x_1^4 & x_1^3 & x_1^3 & x_1^4 & x_1^6 \\
                                1 & x_2 & x^2_2 & x_2^2 & x_2^3 & x_2^4 & x_2^3 & x_2^3 & x_2^4 & x_2^6 \\
                                1 & x_3 & x^2_3 & x_3^2 & x_3^3 & x_3^4 & x_3^3 & x_3^3 & x_3^4 & x_3^6 \\
                                1 & x_4 & x^2_4 & x_4^2 & x_4^3 & x_4^4 & x_4^3 & x_4^3 & x_4^4 & x_4^6 \\
                                1 & x_5 & x^2_5 & x_5^2 & x_5^3 & x_5^4 & x_5^3 & x_5^3 & x_5^4 & x_5^6 \\
                                1 & x_6 & x^2_6 & x_6^2 & x_6^3 & x_6^4 & x_6^3 & x_5^3 & x_6^4 & x_6^6
                        \end{array}\right] = \begin{bmatrix}
                            V_0 & | & V_1 & | & V_2 & | & V_3
                        \end{bmatrix}
        \end{align*}
        We get that
        \begin{align*}
            t_0&=\rank(V_{\leq 0})=1,\\
            t_1&=\rank(V_{\leq 1})-rank(V_{\leq 0})=3-1=2,\\ t_2&=\rank(V_{\leq 2})-rank(V_{\leq 1})=5-3=2,\\ t_3&=\rank(V_{\leq 3})-\rank(V_{\leq 2})=6-5=1.
        \end{align*}
         The results in Figure~\ref{fig:alg-var} confirm the scaling of the eigenvalues.
        \item The set of nodes in $\ci$ are in general positions. We have $\rank(V_{\leq 2})=6$, where
        \begin{align*} V_{\leq 2} = \left[\begin{array}{c|cc|ccc}
                                1 & x_1 & y_1 & x_1^2 & x_1y_1 & y_1^2 \\
                                1 & x_2 & y_2 & x_2^2 & x_2y_2 & y_2^2 \\
                                1 & x_3 & y_3 & x_3^2 & x_3y_3 & y_3^2 \\
                                1 & x_4 & y_4 & x_4^2 & x_4y_4 & y_4^2 \\
                                1 & x_5 & y_5 & x_5^2 & x_5y_5 & y_5^2 \\
                                1 & x_6 & y_6 & x_6^2 & x_6y_6 & y_6^2 
                        \end{array}\right] = \begin{bmatrix}
                            V_0 & | & V_1 & | & V_2
                        \end{bmatrix}.
        \end{align*}
        We get that
        \begin{align*}
            t_0&=\rank(V_{\leq 0})=1,\\
             t_1&=\rank(V_{\leq 1})-\rank(V_{\leq 0})=3-1=2,\\ t_2&=\rank(V_{\leq 2})-\rank(V_{\leq 1})=6-3=3.
        \end{align*}
        The results in Figure~\ref{fig:non-alg-var} confirm the scaling of the eigenvalues also in this case.
    \end{enumerate}


\subsection{Super-resolution}
In this section, we consider the super-resolution problem for multidimensional sparse measures described in the Introduction, as it was the original motivation for our investigations. We reconstruct measures of the form \eqref{eq:spike} from the noisy measurements \eqref{eq:samples}. We consider the asymptotic regime of constant $N$ and $\Delta\to 0$.
\subsubsection{Local Stability}
    Analogous to some previous studies in the one-dimensional case, e.g. \cite{batenkov2013c,batenkov2018-genspikes}, we consider the ``local stability'' of the problem to be well-approximated by the error incurred by the nonlinear least squares (NLS) method applied to the noisy measurements:
    $$
    \left\{\hat{x}_j^{NLS}, \hat{\alpha}_j^{NLS}\right\}_{j=1}^n = \arg\min_{\hat{\alpha},\hat{x}} \frac{1}{2}\sum_{k\in\Gg_N} \left| \hat{f}(k)-\sum_{j=1}^n \hat{\alpha}_j e^{\imath \left\langle k, \hat{x}_j \right\rangle} \right|^2.
    $$
    The initial values for the NLS method are taken to be the true parameter values (which are clearly unknown in practice). Our implementation of the NLS method is based on the Levenberg-Marquardt algorithm, with the complex residuals converted to real residuals by concatenating the real and imaginary parts. The results are shown in Figures \ref{fig:nls-d2} and \ref{fig:nls-d3}. To measure the accuracy, we fitted the scaled error
    $$
    \kappa:=\max_{j=1,\dots,n} \| \hat{x}_j^{NLS} - x_j \|_{\infty} / \sigma
    $$
    to the curve $\kappa \sim \Delta^{m}$, where $\Delta$ is the minimal distance between the nodes. It can be seen that the accuracy crucially depends on the geometry, and is indeed best for the random configuration, and worst for the single-line configuration (for which the scaling is precisely $\kappa \sim \Delta^{2n-2}$ for any $d$, as established in previous works on the subject).
    \begin{figure}[h]
        \centering
        \includegraphics[width=0.3\textwidth]{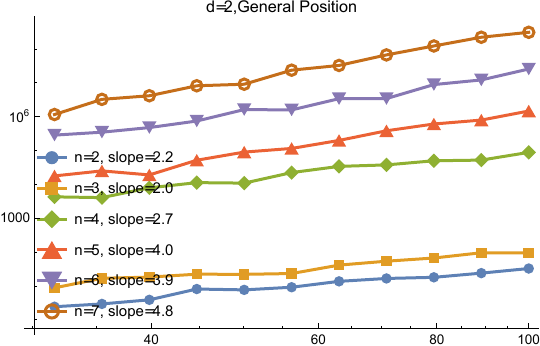}
        \includegraphics[width=0.3\textwidth]{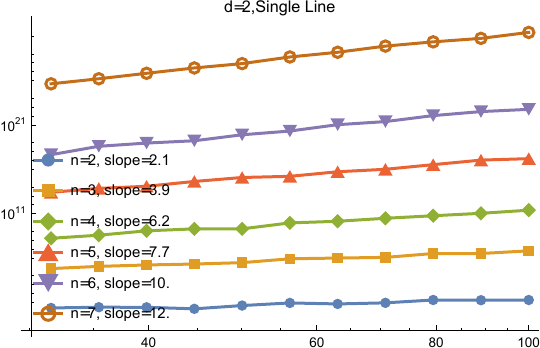}
        \includegraphics[width=0.3\textwidth]{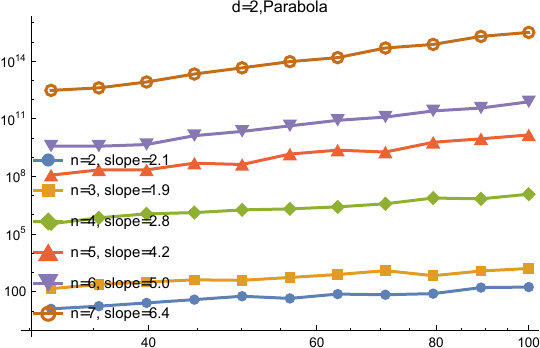}
        \caption{NLS reconstruction, general position/line/parabola, $d=2$.}
        \label{fig:nls-d2}
    \end{figure}

    \begin{figure}[h]
        \centering
        \includegraphics[width=0.3\textwidth]{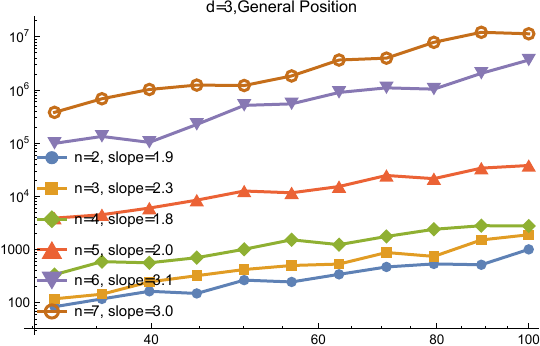}
        \includegraphics[width=0.3\textwidth]{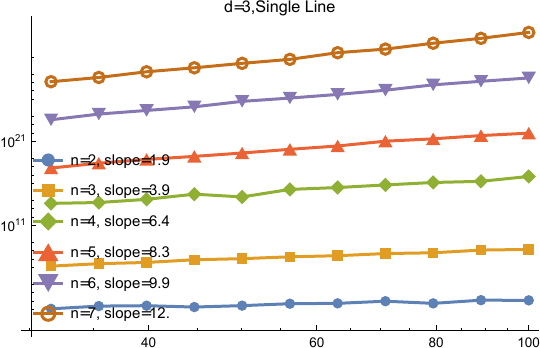}
        \includegraphics[width=0.3\textwidth]{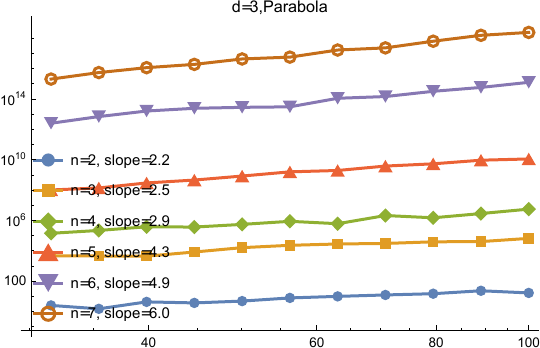}
        \caption{NLS reconstruction, general position/line/parabola, $d=3$.}
        \label{fig:nls-d3}
    \end{figure} 

\subsubsection{2D-ESPRIT}

We have implemented the 2D ESPRIT algorithm according to \cite[Section IV-A]{sahnoun2017}. The experimental setup was as follows:
\begin{enumerate}
    \item The number of nodes varied from $n=2$ to $n=6$.
    \item The geometry was either random, a line $y=5x$, or a parabola $y=10x^2$.
    \item The ESPRIT algorithm parameters were set as follows: $M_1=M_2=40,\; L_1=L_2=10,\;\beta_1=\beta_2=0.5$.
    \item The noise level was set to $\sigma=10^{-20}$.
\end{enumerate} 
The results are shown in Figure \ref{fig:esprit-all}. As for the NLS method, we fitted the scaled error
$$
\kappa:=\max_{j=1,\dots,n} \| \hat{x}_j^{ESPRIT} - x_j \|_{\infty} / \sigma
$$
to the curve $\kappa \sim \Delta^{m}$, where $\Delta$ is the minimal distance between the nodes. As in the NLS case, it can be seen that the accuracy crucially depends on the geometry, providing best scaling for the random configuration, and worst scaling for the single-line configuration.

\begin{figure}[h]
    \centering
    \includegraphics[width=0.3\textwidth]{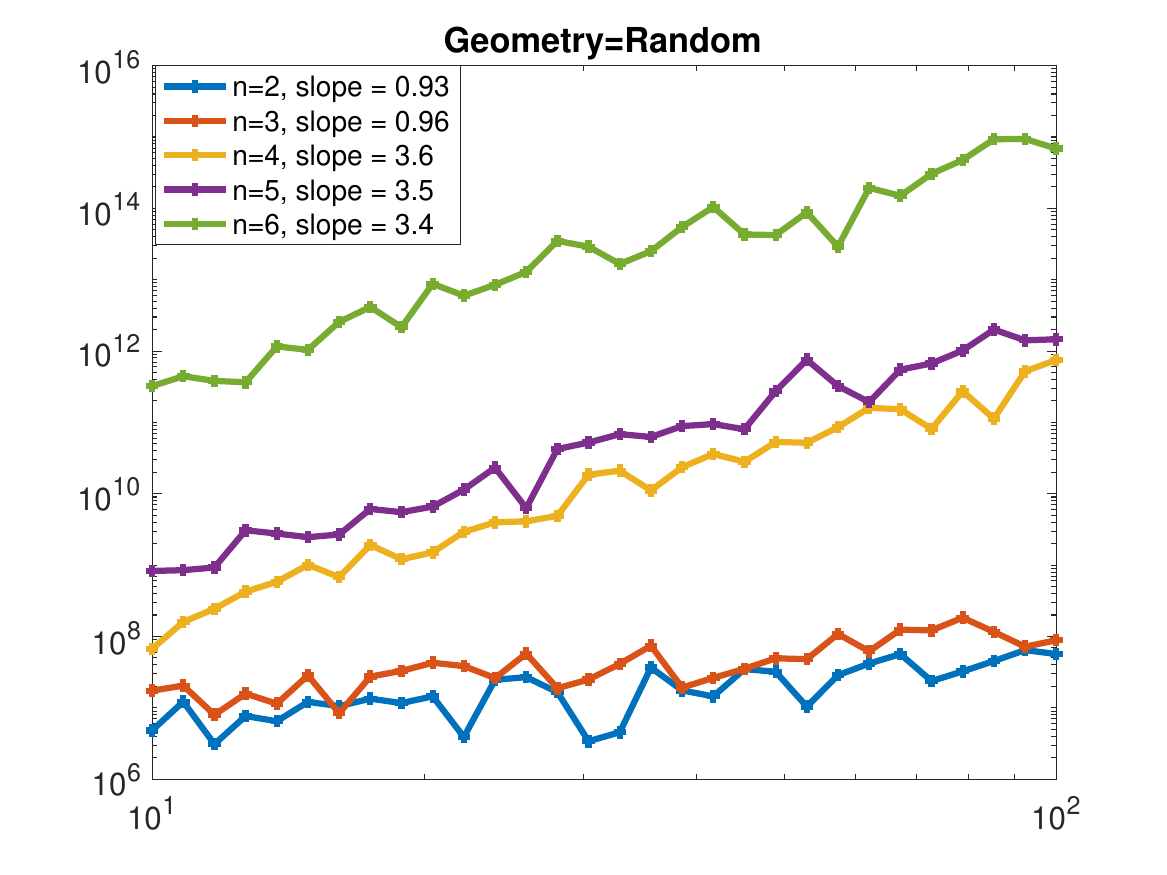}
    \includegraphics[width=0.3\textwidth]{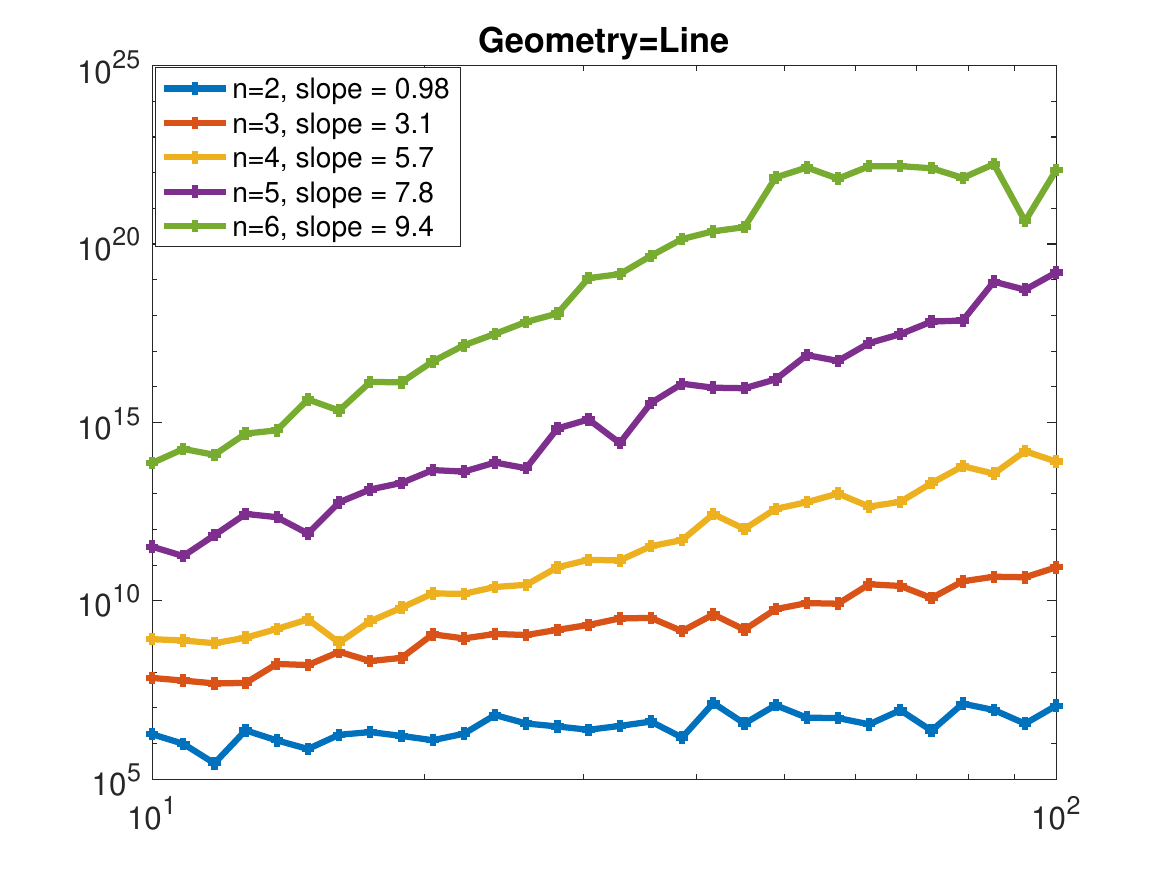}
    \includegraphics[width=0.3\textwidth]{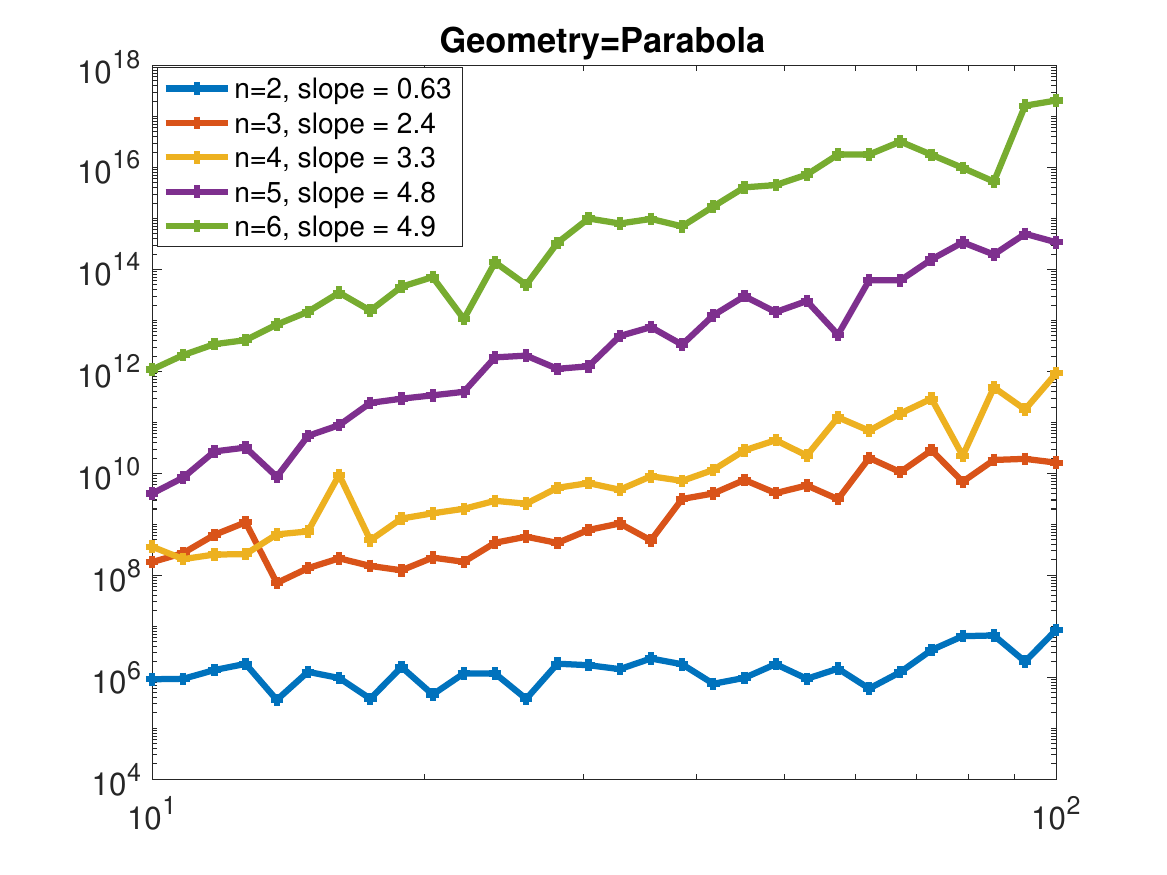}
    \caption{Accuracy of 2D ESPRIT for different geometries.}
    \label{fig:esprit-all}
\end{figure}

\appendix
\section{Taylor Expansion}\label{app:taylor}
 Let $\mathcal{K} \in \mathcal{C}^{(r,r)}(\Omega)$ where $\Omega$ is an open neighborhood of $0$ and $r \geq m+1$, where $m=\mu(\ci)$ is the discrete moment order of $\ci$ (Definition \ref{def:mu-def}). We recall the Maclaurin expansion for bivariate functions from \cite[section 5]{usevich2021}. For $\epsilon x,\epsilon y$ such that $[0,\epsilon x], [0,\epsilon y] \subset \Omega$ (where $[0,x]$ is a line segment from $0$ to $x$), we have:
\begin{align*}
    \mathcal{K}_{\epsilon}(x,y) = \mathcal{K}(\epsilon x,\epsilon y) &= \sum_{\alpha,\beta \in \PP_m}\frac{{(\epsilon x)}^{\alpha}{(\epsilon y)}^{\beta}}{\alpha ! \beta !}\mathcal{K}^{(\alpha,\beta)}(0,0) \\
    &+ \sum_{\alpha \in \PP_m, \beta \in \HH_{m+1}}\frac{{(\epsilon x)}^{\alpha}{(\epsilon y)}^{\beta}}{\alpha ! \beta !}\mathcal{K}^{(\alpha,\beta)}(0,\theta_{\epsilon y, \alpha}\epsilon y) 
    + \sum_{\alpha \in \HH_{m+1}, \beta \in \PP_m}\frac{{(\epsilon x)}^{\alpha}{(\epsilon y)}^{\beta}}{\alpha ! \beta !}\mathcal{K}^{(\alpha,\beta)}(\eta_{\epsilon x, \beta}\epsilon x,0) \\
    &+ \sum_{\alpha,\beta \in \HH_{m+1}}\frac{{(\epsilon x)}^{\alpha}{(\epsilon y)}^{\beta}}{\alpha ! \beta !}\mathcal{K}^{(\alpha,\beta)}(\psi_{\epsilon x, \epsilon y}\epsilon x,\xi_{\epsilon x, \epsilon y}\epsilon y),
\end{align*}

where $\{\theta_{\epsilon y, \alpha}\}_{\alpha \in \PP_m} \subset [0,1]$ depend on $\epsilon y$, $\{\eta_{\epsilon x, \beta}\}_{\beta \in \PP_m} \subset [0,1]$ depend on $\epsilon x$ and $\psi_{\epsilon x,\epsilon  y},\xi_{\epsilon x,\epsilon  y}$ depend on both $\epsilon x$ and $\epsilon y$.
\par Let $\PP_m = \{\alpha_1,\dots, \alpha_{p_m}\}$. Then we can write the above expansion as follows:
\begin{align*}
    &\mathcal{K}(\epsilon x,\epsilon y) = \big[{(\epsilon x)}^{\alpha_1},\dots,{(\epsilon x)}^{\alpha_{p_m}}\big]W_{\leq m}{\big[{(\epsilon y)}^{\alpha_1},\dots,{(\epsilon y)}^{\alpha_{p_m}}\big]}^T \\
    &+ \epsilon^{m+1}\big[{(\epsilon x)}^{\alpha_1},\dots,{(\epsilon x)}^{\alpha_{p_m}}\big]\mathbf{w}_{1,y}(\epsilon) + \epsilon^{m+1}{\mathbf{w}_{2,x}(\epsilon)}^T{\big[{(\epsilon y)}^{\alpha_1},\dots,{(\epsilon y)}^{\alpha_{p_m}}\big]}^T \\
    &+ \epsilon^{2(m+1)}w_{3,x,y}, \\
    &\mathbf{w}_{1,y}(\epsilon) = {\begin{bmatrix}
        \sum_{\beta \in \HH_{m+1}}\frac{\mathcal{K}^{(\alpha_1,\beta)}(0,\theta_{\epsilon y, \alpha_1}\epsilon y)}{\alpha_1! \beta!}y^{\beta}, \dots, \sum_{\beta \in \HH_{m+1}}\frac{\mathcal{K}^{(\alpha_{p_m},\beta)}(0,\theta_{\epsilon y, \alpha_{p_m}}\epsilon y)}{\alpha_{p_m}! \beta!}y^{\beta} 
    \end{bmatrix}}^T \\
    &\mathbf{w}_{2,x}(\epsilon) = \begin{bmatrix}
        \sum_{\alpha \in \HH_{m+1}}\frac{\mathcal{K}^{(\alpha,\beta)}(\eta_{\epsilon x,\beta}(\epsilon x),0)}{\alpha! \beta!}x^{\alpha}
    \end{bmatrix}_{\beta \in \PP_m}^T\\
    &w_{3,x,y}(\epsilon) = \sum_{\alpha, \beta \in \HH_{m+1}}\frac{\mathcal{K}^{(\alpha,\beta)}(\psi_{\epsilon x, \epsilon y}(\epsilon x),\xi_{\epsilon x, \epsilon y}(\epsilon y))}{\alpha! \beta!}{(\epsilon x)}^{\alpha}{(\epsilon y)}^{\beta}
\end{align*}
where $\mathbf{w}_{2,x}, \mathbf{w}_{1,y}: [0,\epsilon_0] \rightarrow \RR^{p_m}$ are bounded vector functions with $\epsilon_0 > 0$ such that $\epsilon_0 x, \epsilon_0 y \in \Omega$, $\epsilon \in [0,\epsilon_0]$ and $w_{3,x,y}$ is bounded.
\par Let $\epsilon_0 > 0$ such that $\epsilon_0 x_i \in \Omega$ for $x_i \in \ci$. Thus for $0 \leq \epsilon \leq \epsilon_0$, the scaled kernel matrix $K_{\epsilon}$ admits the following expansion:
\begin{equation}
        K_\epsilon = V_{\leq m}\Delta_m W \Delta_m V_{\leq m}^T + \epsilon^{m+1}(V_{\leq m}\Delta_m W_1(\epsilon)+W_2(\epsilon)\Delta_m V_{\leq m}^T)+\epsilon^{2{ (m+1)}}W_3(\epsilon) 
    \end{equation}
where $W_1(\epsilon) := \big[\mathbf{w}_{1,x_1}(\epsilon), \dots, \mathbf{w}_{1,x_n}(\epsilon)\big]$, $W_2(\epsilon) :={\big[\mathbf{w}_{2,x_1}(\epsilon), \dots, \mathbf{w}_{2,x_n}(\epsilon)\big]}^T$ and $W_3(\epsilon) = \big[w_{3,x_i,x_j}(\epsilon)\big]_{i,j=1}^{n,n}$.
\section{Auxiliary Lemmas}

\begin{lemma}\label{app:lem}
  Let $D \in \RR^{k\times k}$ be a $k \times k$ diagonal matrix and $P$ a permutation matrix corresponding to a permutation $\sigma$, then
  \[ PDP^T = \diag\{d_{\sigma(1)},\dots,d_{\sigma(k)}\}, \quad D=\diag\{d_1,\dots,d_k\}.\]
\end{lemma}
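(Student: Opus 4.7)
The plan is to prove Lemma~\ref{app:lem} by a direct calculation, since this is essentially a statement about how permutation matrices act by conjugation on a diagonal matrix. No deep machinery is needed; the only subtlety is fixing the convention that relates $P$ to $\sigma$.

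First I would fix the convention: if $P$ is the permutation matrix associated to $\sigma$, then $P$ is characterized by its action on the standard basis $\{e_1,\dots,e_k\}$ via $Pe_j = e_{\sigma(j)}$, equivalently $P_{ij} = \delta_{i,\sigma(j)}$. From this it follows that $P^{-1} = P^T$, so $P^T$ corresponds to $\sigma^{-1}$.

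Next, I would use the spectral decomposition of $D$. Writing $D = \sum_{j=1}^k d_j e_j e_j^T$, the conjugation computes as
\[
PDP^T \;=\; \sum_{j=1}^k d_j (Pe_j)(Pe_j)^T \;=\; \sum_{j=1}^k d_j \, e_{\sigma(j)} e_{\sigma(j)}^T.
\]
Reindexing the sum by setting $i = \sigma(j)$, i.e.\ $j = \sigma^{-1}(i)$, yields
\[
PDP^T \;=\; \sum_{i=1}^k d_{\sigma^{-1}(i)} \, e_i e_i^T \;=\; \diag\bigl(d_{\sigma^{-1}(1)},\dots,d_{\sigma^{-1}(k)}\bigr),
\]
which is exactly the content of the lemma (up to identifying $\sigma$ with its inverse, depending on the author's convention for associating $\sigma$ with $P$). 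Equivalently, one can verify the same identity entry-wise via
\[
(PDP^T)_{ij} \;=\; \sum_{k,\ell} \delta_{i,\sigma(k)} \, d_k \delta_{k\ell} \, \delta_{j,\sigma(\ell)} \;=\; \delta_{ij}\, d_{\sigma^{-1}(i)},
\]
which simultaneously shows that $PDP^T$ is diagonal and identifies its diagonal entries.

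There is essentially no obstacle here; the only thing to be careful about is matching the indexing convention used in the body of the paper, where Lemma~\ref{app:lem} is invoked to assert $\tilde{\Delta}_m = P^T \Delta_m P = \diag(\epsilon^{i_1}I_{h_{i_1}},\dots,\epsilon^{i_m}I_{h_{i_m}},\epsilon^m I_{h_m})$. Since $\Delta_m$ is block-diagonal and the column pivoting permutation $P$ acts blockwise on groups of identical diagonal entries, the calculation above applied blockwise gives the stated form, which completes the argument.
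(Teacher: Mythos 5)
Your proof is correct, and it reaches the result by a slightly different elementary computation than the paper's. The paper fixes the convention $(Pv)_i = v_{\sigma(i)}$, observes that $PA$ permutes the rows of $A$ accordingly, applies this to $A = D$ to get rows $d_{\sigma(j)} e_{\sigma(j)}^T$, and then multiplies each row by $P^T$ to land back on $e_j^T$, stacking to obtain $PDP^T = \diag\{d_{\sigma(1)},\dots,d_{\sigma(k)}\}$. You instead expand $D = \sum_j d_j e_j e_j^T$ and conjugate the rank-one pieces, which is arguably cleaner and immediately exhibits $PDP^T$ as diagonal; the entry-wise check is a nice complementary verification. The one thing to flag is the convention mismatch: you take $Pe_j = e_{\sigma(j)}$, which is the transpose of the paper's convention $(Pv)_i = v_{\sigma(i)}$ (equivalently $Pe_j = e_{\sigma^{-1}(j)}$), so your answer naturally comes out as $\diag(d_{\sigma^{-1}(1)},\dots,d_{\sigma^{-1}(k)})$. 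You correctly noticed this and pointed out that it is a matter of identifying $\sigma$ with $\sigma^{-1}$; to match the lemma verbatim one should adopt the paper's row-action convention, under which your computation reads $PDP^T = \sum_j d_j\,(Pe_j)(Pe_j)^T = \sum_j d_j\, e_{\sigma^{-1}(j)} e_{\sigma^{-1}(j)}^T = \diag(d_{\sigma(1)},\dots,d_{\sigma(k)})$ after reindexing. In short: correct argument, equivalent conclusion, minor bookkeeping difference in the $\sigma$ versus $\sigma^{-1}$ labeling.
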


\begin{proof}
    If $P$ is a permutation matrix corresponding to a permutation $\sigma$, then for every vector $v\in\RR^k$
$$
Pv = \begin{bmatrix}
    v_{\sigma(1)} \\
    \vdots \\
    v_{\sigma(k)}
\end{bmatrix}
$$
and then for every matrix $A=\begin{bmatrix}R_1\\\vdots\\R_k\end{bmatrix}\in\RR^{k\times k}$, we have
$$
PA = \begin{bmatrix}
    R_{\sigma(1)} \\
    \vdots \\
    R_{\sigma(k)}
\end{bmatrix}.
$$
Now let $A=D=\diag\{d_1,\dots,d_k\}$, and take $j=1,\dots,k$. Then $R_{\sigma(j)}=d_{\sigma(j)} e_{\sigma(j)}^T$. Therefore
$$
R_{\sigma(j)} P^T = d_{\sigma(j)}\left(P e_{\sigma(j)}\right)^T = d_{\sigma(j)} e_j^T.
$$
Stacking all the rows we get $PDP^T=\diag\{d_{\sigma(1)},\dots,d_{\sigma(k)}\}$.
\end{proof}

\begin{lemma}\label{app:det-lem}
    Let $M, B \in \RR^{n \times n}$. For $\epsilon > 0$ we have
    \begin{equation}
        \det(M + \epsilon B) = \det(M) + O(\epsilon),\quad \epsilon \ll 1.
    \end{equation}
\end{lemma}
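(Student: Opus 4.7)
The plan is to use the Leibniz formula for the determinant, observing that $\det(M+\epsilon B)$ is a polynomial in $\epsilon$ of degree at most $n$ whose constant term is exactly $\det(M)$. The remaining terms all contain at least one factor of $\epsilon$, which yields the claimed $O(\epsilon)$ bound as $\epsilon \to 0$.

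Concretely, I would write
\[
    \det(M+\epsilon B) \;=\; \sum_{\sigma \in S_n}\operatorname{sgn}(\sigma)\prod_{i=1}^{n}\bigl(M_{i,\sigma(i)}+\epsilon B_{i,\sigma(i)}\bigr),
\]
and expand each product over the choice of ``$M$-factor'' versus ``$\epsilon B$-factor'' in each of the $n$ positions. The unique term with no $\epsilon B$-factors contributes $\sum_\sigma \operatorname{sgn}(\sigma)\prod_i M_{i,\sigma(i)} = \det(M)$; every other term has at least one factor of $\epsilon$, and is therefore of order at most $\epsilon$ times a constant depending only on the entries of $M$ and $B$. Grouping these residual terms by powers of $\epsilon$ gives
\[
    \det(M+\epsilon B) \;=\; \det(M) + \sum_{k=1}^{n}\epsilon^{k}\,c_{k}(M,B),
\]
where each coefficient $c_k(M,B)$ is a fixed polynomial in the entries of $M$ and $B$ and therefore bounded. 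For $\epsilon \ll 1$ the whole sum on the right is dominated by its linear term, so it is $O(\epsilon)$, which concludes the proof.

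There is essentially no obstacle here; the argument is a direct consequence of the fact that the determinant is a degree-$n$ polynomial in the matrix entries. The only mild subtlety is making sure the implicit constant in $O(\epsilon)$ is allowed to depend on $M,B$ and $n$ (but not on $\epsilon$), which is the standard convention used throughout the paper.
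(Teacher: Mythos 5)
Your proof is correct. It differs from the paper's in one respect: you expand $\det(M+\epsilon B)$ directly from the Leibniz formula, separating the unique term with no $\epsilon B$-factors (which gives $\det(M)$) from all the remaining terms (each carrying at least one power of $\epsilon$), and note that the coefficients $c_k(M,B)$ are bounded polynomials in the entries. The paper instead invokes the closed-form expansion of $\det(M+\epsilon B)$ in terms of $k$-th adjugates and $k$-th compound matrices (formula (0.8.12.3) in Horn and Johnson), and reads off the $k=0$ term via $\operatorname{adj}_0(M)=\det(M)$ and $C_0(B)=1$. Both arguments establish exactly the same polynomial identity $\det(M+\epsilon B)=\det(M)+\sum_{k\ge 1}\epsilon^k c_k(M,B)$; the paper's route is a one-line citation once the adjugate/compound machinery is in place, while yours is more elementary and self-contained, needing only the Leibniz sum. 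Neither has any gap, and the implicit constant in $O(\epsilon)$ depends on $M,B,n$ in both, as you correctly flag.
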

\begin{proof}
    Recall formula (0.8.12.3) for sum of matrices in \cite{horn2013}:
    \begin{equation}
        \det(M + \epsilon B) = \sum_{k=0}^n \epsilon^{k}\operatorname{tr}(\operatorname{adj}_k(M)C_k(B)),
    \end{equation}
    where $\operatorname{tr}(\cdot)$ is the trace of matrix, $\operatorname{adj}_k(\cdot)$ is the $k$-th adjugate of matrix (0.8.12) and $C_k(\cdot)$ is the $k$-th compound (0.8.1). Using the following properties: $\operatorname{adj}_0(M) = \det(M)$ and $C_0(M)=1$, we have
    \begin{equation}
        \det(M + \epsilon B) = \epsilon^0 \operatorname{tr}(\operatorname{adj}_0(M)C_0(B)) + O(\epsilon) = \det(M) + O(\epsilon),
    \end{equation}
    completing the proof.
\end{proof}

\printbibliography
\end{document}